\documentclass[11pt, a4paper]{article}
\usepackage{amsmath,amsthm,amssymb, amsfonts, amscd}
\usepackage{calrsfs}



\newtheorem{theorem}{Theorem}[section]
\newtheorem{proposition}[theorem]{Proposition}
\newtheorem{corollary}[theorem]{Corollary}
\newtheorem{lemma}[theorem]{Lemma}

\theoremstyle{definition}
\newtheorem{remark}[theorem]{Remark}
\newtheorem{example}[theorem]{Example}
\newcommand{\dis}{\displaystyle}


\DeclareMathOperator{\spanned}{span}
\DeclareMathOperator{\card}{card}

\title{Multilinear H\"older-type inequalities on Lorentz sequence spaces}
\author{Daniel Carando \thanks{Dep. Matem\'atica, Fac. C. Exactas y Naturales, Universidad de Buenos Aires,%
Ciudad Universitaria, 1428, Buenos Aires, Argentina \texttt{dcarando@dm.uba.ar}
Partially supported by CONICET PIP 5272 and ANPCyT PICT 05 17-33042. Partially supported by UBACyT Grant X108 and ANPCyT PICT 06 00587.}%
\and Ver\'onica Dimant \thanks{Departamento de Matem\'{a}tica, Universidad de San Andr\'{e}s,%
Vito Dumas 284 (B1644BID) Victoria,Buenos Aires, Argentina.\texttt{vero@udesa.edu.ar}
Partially supported by CONICET PIP 5272 and ANPCyT PICT 05 17-33042}%
\and Pablo Sevilla-Peris  \thanks{Departamento de Matem\'atica Aplicada (ETSMRE) and IUMPA, Universidad Polit\'ecnica de Valencia,
Av. Blasco Ib\'a\~nez 21 46010, Valencia, Spain \texttt{psevilla@mat.upv.es}
Supported by the MECD Project MTM2005-08210 and grants GV-AEST06/092 and UPV-PAID-00-06.}}

%

\date{}

\begin{document}

\maketitle

\begin{abstract}
We establish H\"older type inequalities for Lorentz sequence spaces and their duals. In order to achieve these and some related inequalities,
we study diagonal multilinear forms in general sequence spaces, and obtain estimates for their norms. We also consider norms of multilinear
forms in different Banach multilinear ideals.
\end{abstract}

\section{Introduction}

Given a sequence $\alpha \in \ell_{\infty}$, the generalized H\"older's inequality affirms that,
for $1 \leq p \leq n$, there exists a constant $C>0$ such that for every $x_{1}, \ldots , x_{n} \in \ell_{p}$
\begin{equation} \label{Hoelder orig}
\bigg\vert \sum_{k=1}^{\infty} \alpha(k) x_{1}(k) \cdots  x_{n}(k) \bigg\vert
\leq C \Vert x_{1} \Vert_{\ell_{p}} \cdots  \Vert x_{n} \Vert_{\ell_{p}}.
\end{equation}
On the other hand, if $n < p < \infty$, again H\"older's inequality gives
that \eqref{Hoelder orig} holds if and only if $\alpha \in
\ell_{p/(p-n)}$. Moreover, it can be shown that the best constant
$C$ in \eqref{Hoelder orig} is in each case $\Vert \alpha
\Vert_{\ell_{\infty}}$ and $\Vert \alpha \Vert_{\ell_{p/(p-n)}}$. A natural
question now is if inequalities analogous to \eqref{Hoelder orig}
can be found in other Banach sequence spaces (see below for
definitions). More precisely, given $E$ a Banach sequence
space, under what conditions on $\alpha \in \ell_{\infty}$ there
exists $C>0$ such that for every $x_{1}, \ldots , x_{n} \in E$ the
following holds
\begin{equation} \label{Hoelder Bss}
\bigg\vert \sum_{k=1}^{\infty} \alpha(k) x_{1}(k) \cdots  x_{n}(k)
\bigg\vert \leq C \Vert x_{1} \Vert_{E} \cdots  \Vert x_{n}
\Vert_{E}?
\end{equation}
Our aim in this paper is to analyze the situation when $E$ is a Lorentz space $d(w,p)$ or a dual of a Lorentz space $d(w,p)^{*}$.
Then our two main results are

\begin{theorem} \label{lorentz}
Let $\alpha \in \ell_{\infty}$ and $E= d(w,p)$, then
\begin{enumerate}
\item[(a)] If $n \leq p$, then \eqref{Hoelder Bss} holds if and
only if $\alpha \in d(w,p/n)^{*}$.
\item[(b)] If $n>p$, then
\eqref{Hoelder Bss} holds if and only if
$\alpha\in m_\Psi$, where $m_\Psi$ is the Marcinkiewicz space
associated with  $\Psi(N)= \left( \sum_{k=1}^{N} w(k)
\right)^{n/p}$. If in addition $w$ is $n/(n-p)$-regular, then we
can change $m_\Psi$ by $\ell_\infty$.
\end{enumerate}
The best constant is $\|\alpha\|_{d(w,p/n)^{*}}$ in case {\rm (a)} and $\|\alpha\|_{m_\Psi}$ in case {\rm (b)}.
\end{theorem}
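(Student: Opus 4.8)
The plan is to reduce the $n$-linear inequality \eqref{Hoelder Bss} to a single-variable linear problem --- the computation of a Köthe-dual (quasi-)norm --- and then to treat the ranges $n\le p$ and $n>p$ separately. The best constant in \eqref{Hoelder Bss} is, by definition, the norm of the diagonal $n$-linear form $\Phi_\alpha(x_1,\dots,x_n)=\sum_k\alpha(k)x_1(k)\cdots x_n(k)$ on $d(w,p)\times\cdots\times d(w,p)$, so \eqref{Hoelder Bss} holds exactly when $\|\Phi_\alpha\|<\infty$. Using the facts about diagonal multilinear forms on sequence spaces developed above, together with the multiplicative Hölder inequality $\||x_1|^{1/n}\cdots|x_n|^{1/n}\|_E\le\|x_1\|_E^{1/n}\cdots\|x_n\|_E^{1/n}$ (valid in any Banach sequence space with the Fatou property, via the Köthe-dual representation of $\|\cdot\|_E$), one obtains
\[
\|\Phi_\alpha\|=\sup\Big\{\sum_k|\alpha(k)|\,|x(k)|^n : \|x\|_{d(w,p)}\le 1\Big\}.
\]
I then substitute $y=|x|^n$. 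Since $t\mapsto t^n$ is increasing it commutes with the decreasing rearrangement, so $\|x\|_{d(w,p)}^p=\sum_k(x^*(k))^pw(k)=\sum_k(y^*(k))^{p/n}w(k)=\|y\|_{d(w,p/n)}^{p/n}$, i.e.\ $\|x\|_{d(w,p)}\le 1\iff\|y\|_{d(w,p/n)}\le 1$. Hence the best constant equals the Köthe-dual quasi-norm $\|\alpha\|_{d(w,p/n)^{\times}}=\sup\{\sum_k|\alpha(k)|\,|y(k)| : y\ge 0,\ \|y\|_{d(w,p/n)}\le 1\}$, and the theorem becomes a matter of identifying this quantity.

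If $n\le p$, then $p/n\ge 1$, so $d(w,p/n)$ is an order-continuous Banach sequence space and its Köthe dual is its topological dual $d(w,p/n)^{*}$. Thus \eqref{Hoelder Bss} holds if and only if $\alpha\in d(w,p/n)^{*}$, with best constant $\|\alpha\|_{d(w,p/n)^{*}}$; this proves (a).

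If $n>p$, put $q=p/n<1$, so $d(w,q)$ is only a quasi-Banach lattice, its dual is not a Lorentz space, and I must compute $\|\alpha\|_{d(w,q)^{\times}}$ by hand. By the Hardy--Littlewood rearrangement inequality one may assume $\alpha=\alpha^*$ and that the competitor $y=y^*$ is nonincreasing. Set $W(N)=\sum_{k=1}^Nw(k)$, $A_N=\sum_{k=1}^N\alpha^*(k)$ and $M=\sup_N A_N/W(N)^{1/q}$. Since $\|\mathbf 1_{[1,N]}\|_{d(w,q)}=W(N)^{1/q}$ ($\mathbf 1_{[1,N]}$ the indicator of the first $N$ coordinates), testing against $y=W(N)^{-1/q}\mathbf 1_{[1,N]}$ gives $\|\alpha\|_{d(w,q)^{\times}}\ge M$. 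For the reverse inequality I decompose a nonincreasing $y\ge 0$ as $y=\sum_m d_m\mathbf 1_{[1,m]}$ with $d_m=y(m)-y(m+1)\ge 0$, so that $\sum_k\alpha^*(k)y(k)=\sum_m d_mA_m\le M\sum_m d_m W(m)^{1/q}=M\sum_m d_m\,\|\mathbf 1_{[1,m]}\|_{d(w,q)}$. The decisive point is that, because $q<1$, the functional $\|\cdot\|_{d(w,q)}$ is \emph{superadditive} on the cone of nonincreasing nonnegative sequences (a weighted reverse Minkowski inequality), whence $\sum_m d_m\|\mathbf 1_{[1,m]}\|_{d(w,q)}\le\|\sum_m d_m\mathbf 1_{[1,m]}\|_{d(w,q)}=\|y\|_{d(w,q)}\le 1$. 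Therefore $\|\alpha\|_{d(w,q)^{\times}}=M=\sup_N A_N/W(N)^{n/p}=\|\alpha\|_{m_\Psi}$ with $\Psi(N)=W(N)^{n/p}$, which is (b).

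For the final clause, $m_\Psi=\ell_\infty$ with equivalent quasi-norms precisely when $\sup_N N/\Psi(N)<\infty$, i.e.\ $W(N)\gtrsim N^{p/n}$; this is exactly what the $n/(n-p)$-regularity of $w$ provides, so the last assertion is a direct unravelling of that definition. I expect the only real obstacle to be the upper bound in the case $n>p$: one must recognise that for $q<1$ it is the \emph{reverse} triangle inequality for $\|\cdot\|_{d(w,q)}$ on nonincreasing sequences that turns the Köthe dual into the Marcinkiewicz space $m_\Psi$ rather than a Lorentz space; the diagonalization step, the power substitution, and the order-continuous duality in case (a) are then essentially formal.
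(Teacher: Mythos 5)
Your proposal is correct and follows essentially the same route as the paper: reduce to the diagonal polynomial and the K\"othe dual of the $n$-concavification $d(w,p/n)$ (which settles (a) at once), obtain the necessity in (b) by testing on normalized indicators, and obtain the sufficiency by Abel summation against the partial sums of $\alpha^{\star}$. The only cosmetic difference is that where you invoke the reverse Minkowski (superadditivity) inequality for $\|\cdot\|_{d(w,p/n)}$ with $p/n<1$ on the layer-cake decomposition $y=\sum_m d_m\mathbf 1_{[1,m]}$, the paper proves the same key inequality \eqref{induc2} by induction via the monotonicity of $\phi(t)=(t+w(N+1))^{n/p}-t^{n/p}$, which is exactly the two-term case of that superadditivity.
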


\begin{theorem} \label{duales 2}
Let $\alpha \in \ell_{\infty}$ and $E= d(w,p)^{*}$, then
\begin{enumerate}
\item[(a)] If $n' \leq p$, then \eqref{Hoelder Bss} holds if and
only if  $\alpha \in \ell_{\infty}$.
\item[(b)] If $n'>p>1$, then
\eqref{Hoelder Bss} holds if and only if  $\alpha \in
d(w^{\frac{n'}{n'-p}}, \frac{p'}{p'-n})$.
\item[(c)] If $p=1$, then
\eqref{Hoelder Bss} holds if and only if  $\alpha \in d(w^{n},1)$.
\end{enumerate}
The best constant in each case is the norm of $\alpha$ in the
corresponding space.
\end{theorem}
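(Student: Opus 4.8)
The plan is to deal with \eqref{Hoelder Bss} first for an arbitrary Banach sequence space $E$, turning it into a question about a multiplication operator, and then to specialize to $E=d(w,p)^*$ by passing to the predual $d(w,p)$, whose norm is explicit. Since the best constant in \eqref{Hoelder Bss} and the memberships asserted in the statement all depend only on the decreasing rearrangement of $\alpha$, I may and will assume that $\alpha$ is non-negative and non-increasing. The first step, valid for any $\alpha\in\ell_{\infty}$ and any Banach sequence space $E$, is to observe that the best constant in \eqref{Hoelder Bss} equals $\big\|M_{\alpha^{1/n}}\colon E\to\ell_n\big\|^{\,n}$, where $M_\beta$ denotes coordinatewise multiplication and $\alpha^{1/n}=(\alpha(k)^{1/n})_k$: the lower bound comes from taking $x_1=\cdots=x_n=x$, since $\sum_k\alpha(k)\,|x(k)|^n=\|M_{\alpha^{1/n}}x\|_{\ell_n}^{\,n}$, and the upper bound from the arithmetic--geometric mean inequality $|x_1(k)\cdots x_n(k)|\le\tfrac1n\sum_i|x_i(k)|^n$ applied coordinatewise and summed against $\alpha$. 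This is where the general results on diagonal multilinear forms enter, and it reduces the whole problem to computing $\|M_{\alpha^{1/n}}\colon d(w,p)^*\to\ell_n\|$.

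Next I would pass to the predual. As $\sum_kw(k)=\infty$, the space $d(w,p)$ is order continuous and has the Fatou property, so that its K\"othe dual coincides with $d(w,p)^*$ and $(d(w,p)^*)^{\times}=d(w,p)$; together with $(\ell_n)^{\times}=\ell_{n'}$ and the elementary identity $\|M_\beta\colon X\to Y\|=\|M_\beta\colon Y^{\times}\to X^{\times}\|$ for multiplication operators between K\"othe spaces, this gives $\|M_{\alpha^{1/n}}\colon d(w,p)^*\to\ell_n\|=\|M_{\alpha^{1/n}}\colon\ell_{n'}\to d(w,p)\|$, where now the target carries the explicit Lorentz norm. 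Put $\beta=\alpha^{1/n}$, again non-negative and non-increasing. Using standard rearrangement/majorization inequalities, together with the fact that a pointwise product of two non-negative non-increasing sequences is of the same type, one sees that $\|M_\beta\colon\ell_{n'}\to d(w,p)\|^{\,p}$ equals the supremum of $\sum_j\beta(j)^p\,w(j)\,y(j)^p$ over non-increasing $y\ge0$ with $\sum_jy(j)^{n'}\le1$. If $n'\le p$ (case (a)), the continuous inclusions $\ell_{n'}\hookrightarrow\ell_p\hookrightarrow d(w,p)$ --- the last because $w(1)=1$ --- together with $|\beta y|\le\|\beta\|_{\infty}|y|$ force this supremum to be $\|\beta\|_{\infty}^{\,p}$; hence the best constant in \eqref{Hoelder Bss} is $\|\alpha\|_{\infty}$ and \eqref{Hoelder Bss} holds for every $\alpha\in\ell_{\infty}$. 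If $n'>p$ (cases (b), with $p>1$, and (c), with $p=1$), H\"older's inequality with conjugate exponents $n'/p$ and $n'/(n'-p)$ --- with equality for $y(j)^{n'}$ proportional to $(\beta(j)^pw(j))^{n'/(n'-p)}$ --- yields
\[
\big\|M_\beta\colon\ell_{n'}\to d(w,p)\big\|^{\,p}=\Big(\sum_j\big(\beta(j)^p\,w(j)\big)^{n'/(n'-p)}\Big)^{(n'-p)/n'},
\]
which is finite precisely when this series converges.

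It then remains to raise this to the power $n/p$ and to substitute $\beta(j)=\alpha(j)^{1/n}$: a direct manipulation of the conjugate exponents, using $n'=n/(n-1)$ and $p'=p/(p-1)$, shows that the exponent of $\alpha(j)$ inside the series becomes $p'/(p'-n)$, the exponent of $w(j)$ becomes $n'/(n'-p)$, and the outer power becomes $(p'-n)/p'$, so that the best constant equals $\|\alpha\|_{d(w^{n'/(n'-p)},\,p'/(p'-n))}$ when $p>1$; when $p=1$, using $(n')'=n$, it equals $\|\alpha\|_{d(w^n,1)}$. In every case \eqref{Hoelder Bss} holds if and only if this quantity is finite, that is, if and only if $\alpha$ belongs to the space named in the statement, and the best constant is precisely its norm there.

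I expect the main effort to lie in the computation of $\|M_\beta\colon\ell_{n'}\to d(w,p)\|$: obtaining its \emph{sharp} value --- not merely the membership condition --- requires some care with the rearrangement reductions and with verifying that the H\"older extremizer $y(j)^{n'}\propto(\beta(j)^pw(j))^{n'/(n'-p)}$ is genuinely non-increasing and normalizable, so that no constraint is lost in passing from the constrained to the unconstrained optimum. The manipulation of conjugate exponents in the last step is routine, but must be carried out precisely to match the weights and indices in the statement.
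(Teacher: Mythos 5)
Your proposal is correct and follows essentially the same route as the paper: reduce the best constant in \eqref{Hoelder Bss} to $\|D_{\alpha^{1/n}}\colon d(w,p)^{*}\to\ell_{n}\|^{n}$ (the paper's Lemma~\ref{norma lambda} plus Remark~\ref{igualnorm}), pass to the transposed multiplication operator $\ell_{n'}\to d(w,p)$, and evaluate its norm exactly by H\"older with exponents $n'/p$ and $n'/(n'-p)$ together with the explicit extremizing sequences. The only (harmless) variations are that you phrase the dualization via K\"othe duals rather than Banach-space adjoints, handle case (a) by the norm-one inclusions $\ell_{n'}\hookrightarrow\ell_{p}\hookrightarrow d(w,p)$ instead of $n$-concavity, and reduce to decreasing $y$ by a majorization argument where the paper applies H\"older permutation by permutation.
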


Our approach to this question is to study multilinear forms on the corresponding sequence spaces. Inequality~(\ref{Hoelder Bss}) can be
read as the continuity of the diagonal multilinear form on $E$ with coefficients $(\alpha(k))_k$. This way to look at H\"older inequalities
is crucial to our proofs of Theorems~\ref{lorentz} and \ref{duales 2}. Moreover, it motivates us to pose an analogous question in a more general
framework: if $\mathfrak{A}$ is a Banach ideal of multilinear mappings and $E$ is a Banach sequence space, under what conditions on
$\alpha \in \ell_{\infty}$ does the diagonal multilinear form with coefficients $(\alpha(k))_k$ belong to $\mathfrak A(^nE)$? As a
direct application of our results in this general framework, we consider nuclear and integral multilinear forms on Lorentz and dual of
Lorentz spaces.

\medskip
The article is organized as follows. In Section 2 we introduce notation, definitions and some general results. Sections~3 and~4 are
devoted to the proofs of Theorems~\ref{lorentz} and~\ref{duales 2}. In Section~5 we broaden the object of our study, considering diagonal
 multilinear forms belonging to different ideals defined on general sequence spaces. Combining this with the results of the previous
 sections we characterize the diagonal integral (and nuclear) multilinear forms on Lorentz sequence spaces and their duals.

\section{Preliminaries}
All through the paper we will use standard notation of the Banach space theory. We will consider complex Banach spaces $E,F, \ldots$ and
its duals will be denoted by $E^{*}, F^{*},\ldots$. Sequences of complex numbers will be denoted by $x = (x(k))_{k=1}^{\infty}$,
where each $x(k) \in \mathbb{C}$. By a Banach sequence space we will mean a Banach space
$E \subseteq \mathbb{C}^{\mathbb{N}}$ of sequences in $\mathbb{C}$ such that
$\ell_{1} \subseteq E \subseteq \ell_{\infty}$ satisfying that if $x \in \mathbb{C}^{\mathbb{N}}$ and $y \in E$ are such
that $\vert x(k) \vert \leq \vert y(k) \vert$ for all $k \in \mathbb{N}$ then $x \in E$ and $\Vert x \Vert \leq \Vert y \Vert$.
For each element in a Banach sequence space $x \in E$ its decreasing rearrangement $(x^{\star}(k))_{k=1}^{\infty}$ is given by
\[
x^{\star} (k) := \inf \{ \sup_{j \in \mathbb{N} \setminus J } | x(j)
|  \colon  J \subseteq \mathbb{N} \  , \ \card (J) < k \}.
\]
A Banach sequence space $E$ is called symmetric if $\| (x(k))_{k} \|_{E} = \|(x^{\star}(k))_{k} \|_{E}$ for every $x \in E$.
For each $N \in \mathbb{N}$ we consider the
$N$-dimensional truncation $E_{N}:= \spanned \{e_{1}, \dots,
e_{N}\}$ and we denote by $E_{0}$ the space of sequences in $E$ that
are all $0$ except for a finite number of coordinates. The canonical
inclusion $i_{N} : E_{N} \hookrightarrow E$ and projection $\pi_{N}
: E \rightarrow E_{N}$ are defined by $i_{N} ((x(k))_{k=1}^{N}) =
(x(1), \dots , x(N), 0,0 , \dots)$ and $\pi_{N}
((x(k))_{k=1}^{\infty}) = (x(k))_{k=1}^{N}$.\\
Given two Banach spaces, we will write $E=F$ if they are topologically isomorphic and $E\overset{1}{=}F$ if they are isometrically
isomorphic.\\

\medskip

The K\"othe dual of a Banach sequence space $E$ is defined as
\[
E^{\times} : = \{ z \in \mathbb{C}^{\mathbb{N}}  \colon \sum_{j \in
\mathbb{N}} |z(j) x(j)| < \infty \mbox{ for all } x \in E \}.
\]
This can be considered even if $E$ is not normed. If $E$ is quasi-normed, $E^{\times}$ with the norm $$\|z \|_{E^{\times}} :=
\sup_{\| x \|_{E} \leq 1} \sum_{j \in \mathbb{N}} |z(j) x(j)|$$ is a
Banach sequence space. It is easily seen that $z \in E^{\times}$ if and only if $\sum_{j \in \mathbb{N}} z(j) x(j)$ is finite
for all $x \in E$ and that $$\|z \|_{E^{\times}} = \sup_{\| x \|_{E} \leq 1} \Big|\sum_{j \in \mathbb{N}} z(j) x(j)\Big|.$$ Also,
$E^{\times}$ is symmetric whenever $E$ is symmetric. Note that
$(E_{N})^{*} \overset{1}{=} (E^{\times})_{N}$ holds for every $N$.\\

Following \cite[1.d]{LibroLiTz2}, a Banach sequence space
$E$ is said to be $r$-convex (with $1 \leq r < \infty$) if there
exists a constant $\kappa > 0$ such that for any choice $x_{1},
\dots, x_{m} \in E$ we have
\[
\bigg\| \bigg( \Big( \sum_{j=1}^{m} | x_{j}(k)|^{r} \Big)^{1/r} \bigg)_{k=1}^{\infty} \bigg\|_{E}
\leq \kappa \ \bigg( \sum_{j=1}^{m} \| x_{j}\|_{E}^{r} \bigg)^{1/r}
\]
On the other hand, $E$ is $s$-concave (with $1 \leq s < \infty$) if
there is a constant $\kappa > 0$ such that
\[
\bigg( \sum_{j=1}^{m} \| x_{j}\|_{E}^{s} \bigg)^{1/s}
\leq \kappa \ \bigg\| \bigg( \Big( \sum_{j=1}^{m} | x_{j}(k)|^{s} \Big)^{1/s}
\bigg)_{k=1}^{\infty} \bigg\|_{E}
\]
for all $x_{1}, \dots, x_{m} \in E$. We denote by $\mathbf{M}^{(r)} (E)$ and $\mathbf{M}_{(s)} (E)$ the smallest constants in each
inequality.\ Recall that $E$ is $r$-convex ($s$-concave) if and only
if $E^{\times}$ is $r'$-concave ($s'$-convex), where $r'$ and $s'$
are the conjugates of $r$ and $s$ respectively (see \cite[1.d.4]{LibroLiTz2}). Moreover, we have $M^{(r)}(E)=M_{(r')}(E^\times)$
($M_{(s)}(E)=M^{(s')}(E^\times)$ ).
If $E$ is $r$-convex for some $r$ or
$s$-concave for some $s$, then we say that $E$ has non-trivial convexity or non-trivial concavity.

Following standard notation, given a symmetric Banach sequence space
$E$ we consider the fundamental function of $E$:
\[
\lambda_{E} (N) := \Big\| \sum_{k=1}^{N} e_{k} \Big\|_{E}
\]
for $N \in \mathbb{N}$. For a detailed study and general facts of Banach sequence space, see \cite{LibroLiTz1, LibroLiTz2, TJ89}.\\

\begin{remark}\label{eleinf}
With this notation we can give a first positive answer to our question. If $E$ is $n$-concave, then $\alpha$ satisfies
\eqref{Hoelder Bss} if and only if $\alpha \in \ell_{\infty}$. Indeed, it is easily seen that being $n$-concave implies
$E \hookrightarrow \ell_{n}$ (given $x \in E$, just take $x_{k} = x(k) e_{k} \in E$ and apply the definition of concavity).
This and \eqref{Hoelder orig} immediately give that \eqref{Hoelder Bss} holds for any $\alpha \in \ell_{\infty}$.
\end{remark}

The space of continuous linear operators between two Banach
spaces $E,F$ will be denoted $\mathcal{L} (E;F)$ and the space of
continuous $n$-linear mappings $E_{1} \times \cdots \times E_{n}
\rightarrow F$ by $\mathcal{L} (E_{1}, \ldots, E_{n};F)$; with the
norm
\[
\Vert T \Vert := \sup \{ \Vert T(x_{1} ,  \ldots , x_{n}) \Vert_{F} \colon \Vert x_{i} \Vert_{E_{i}} \leq 1 \, , \, i=1, \ldots n\}
\]
this is a Banach space. If $E_{1}=\cdots = E_{n}=E$ we will write $\mathcal{L} (^{n} E;F)$ and
whenever $F=\mathbb{C}$ we will simply write $\mathcal{L}(E_{1}, \dots , E_{n})$ or $\mathcal{L} (^{n} E)$.

A mapping $P: E \rightarrow F$ is a continuous $n$-homogeneous
polynomial if there exists an $n$-linear mapping $T\in
\mathcal{L}(^{n}E;F)$ such that $P(x)=T(x, \ldots , x)$ for every
$x \in E$. The space of all continuous $n$-homogeneous polynomials
from $E$ to $F$ is denoted by $\mathcal{P}(^{n} E;F)$; endowed
with the norm $\Vert P \Vert = \sup_{\Vert x \Vert \leq 1 } \Vert
P(x) \Vert$ this is a Banach space. If $P$ is an $n$-homogeneous
polynomial and $T$ is the associated symmetric $n$-linear mapping,
then the polarization formula gives (see \cite[Proposition
1.8]{LibroDi99})
\begin{equation} \label{polariz}
\Vert P \Vert \leq \Vert T \Vert \leq \frac{n^{n}}{n!} \Vert P \Vert.
\end{equation}
A general study of the theory of polynomials on Banach spaces can be found in \cite{LibroDi99}.

\medskip
Ideals of multilinear forms were introduced in \cite{Pi84}.\ Let us
recall the definition.\ An ideal of multilinear forms $\mathfrak{A}$
is a  subclass of $\mathcal{L}$, the class of all multilinear forms
such that, for any  Banach spaces $E_{1}, \dots , E_{n}$ the set
\[
\mathfrak{A}(E_{1}, \dots , E_{n})=\mathfrak{A} \cap
\mathcal{L}(E_{1}, \dots , E_{n})
\]
satisfies
\begin{enumerate}
\item For any $\gamma_{1} \in E^{*}_{1}, \dots , \gamma_{n} \in
E^{*}_{n}$, the mapping $$(x_{1},\dots,x_{n}) \mapsto \gamma_{1}(x_{1})
\cdots \gamma_{n}(x_{n})$$ belongs to $\mathfrak{A}(E_{1}, \dots ,
E_{n})$.
\item If $S,T \in \mathfrak{A}(E_{1}, \dots , E_{n})$, then $S+T \in
\mathfrak{A}(E_{1}, \dots , E_{n})$.
\item If $T \in \mathfrak{A}(E_{1}, \dots , E_{n})$ and $S_{i} \in
\mathcal{L}(F_{i},E_{i})$ for $i=1, \dots ,n$, then $T \circ
(S_{1},\dots S_{n}) \in \mathfrak{A}(F_{1}, \dots , F_{n})$.
\end{enumerate}
An ideal of multilinear forms is called normed if for each
$E_{1},\dots , E_{n}$ there is a norm $\| \cdot \|_{\mathfrak{A}(E_{1}, \dots , E_{n})}$ in $\mathfrak{A}(E_{1},
\dots , E_{n})$ such that
\begin{enumerate}
\item $\|(x_{1},\dots,x_{n}) \mapsto \gamma_{1}(x_{1}) \cdots
\gamma_{n}(x_{n})\|_{\mathfrak{A}(E_{1}, \dots , E_{n})} =
\|\gamma_{1}\| \cdots \|\gamma_{n}\|$.
\item $\| T \circ (S_{1},\dots S_{n}) \|_{\mathfrak{A}(F_{1}, \dots , F_{n})} \leq
\|T\|_{\mathfrak{A}(E_{1}, \dots , E_{n})} \cdot \|S_{1}\| \cdots
\|S_{n}\|$.
\end{enumerate}
Analogously ideals of homogeneous polynomials were defined and studied in \cite{Fl01, Fl02, FlGa03, FlHu02}. However
\cite{FlGa03} shows that a polynomial is in a normed ideal of polynomials if and only if its associated multilinear mapping
is in some ideal of multilinear forms. Hence, dealing with one or the other type of ideals will not lead to essentially
different conclusions.

\vspace{.5cm}

If $(a(k))_{k}$ and $(b(k))_{k}$ are real sequences, we denote
$a(k) \prec b (k)$ when there exists $C>0$ such that $a(k) \leq C
b(k)$ for all $k \in \mathbb{N}$.\ Also, we denote $a(k) \asymp
b(k)$ when $a(k) \prec b(k)$ and $b(k) \prec a(k)$.

\section{Lorentz spaces} \label{Sect Lorentz}
Our aim in this section is to give the proof of Theorem \ref{lorentz}.\ Let us recall first the definition of Lorentz spaces; further
details and properties can be found in \cite[Section
4.e]{LibroLiTz1} and \cite[Section 2.a]{LibroLiTz2}.\ Let
$(w(k))_{k=1}^{\infty}$ be a decreasing sequence of positive
numbers such that $w(1)=1$, $\lim_{k} w(k)=0$ and
$\sum_{k=1}^{\infty} w(k)=\infty$.\ Then the corresponding Lorentz
sequence space, denoted by $d(w,p)$ is defined as the space of
all sequences $(x(k))_{k}$ such that
\[
\| x \| = \sup_{\pi \in \Sigma_{\mathbb{N}}} \bigg(
\sum_{k=1}^{\infty} |x({\pi(k))}|^{p} w(k) \bigg)^{1/p} =
\bigg( \sum_{k=1}^{\infty} |x^{\star}(k)|^{p} w(k) \bigg)^{1/p}
<\infty
\]
where $\Sigma_{\mathbb{N}}$ denotes the group of permutations of the natural numbers.
Each $d(w,p)$ is clearly a symmetric Banach sequence space.\\
The sequence $w$ is said to be $\alpha$-regular ($0< \alpha < \infty$) if $w(k)^{\alpha} \asymp
\frac{1}{k} \sum_{j=1}^{k} w(j)^{\alpha}$ and regular if it is $\alpha$-regular
for some $\alpha$.\\
In \cite{Re81} it can be found that $d(w,p)$ is  $r$-convex (and
$\mbox{\bf M}^{(r)} (d(w,p)) =1$) whenever  $1 \leq r \leq p$. Also
\cite[Theorem 2]{Re81} shows that, for $p<s<\infty$, $d(w,p)$ is $s$-concave if
and only if $w$ is $\frac{s}{s-p}$-regular. It is non-trivially
concave if and only if $w$ is 1-regular.

In \cite{Ga69} and \cite{LibroLiTz1} a description of $d(w,p)^{*}$, the dual of $d(w,p)$,
is given as the space of those sequences $x$ such that there exists  a decreasing $y
\in B_{\ell_{p'}}$ with
\[
\sup_{N} \frac{\sum_{k=1}^{N} x^{\star}(k)}{\sum_{k=1}^{N}
y(k) w(k)^{1/p}} < \infty
\]
for $p>1$. The norm in $d(w,p)^{*}$ is the infimum of the expression
above over all possible decreasing $y \in B_{\ell_{p'}}$. For $p=1$,
\[
d(w,1)^{*}=\Big\{ x \colon \| x \| = \sup_{N}
\frac{\sum_{k=1}^{N} x^{\star}(k)}{\sum_{k=1}^{N} w(k)} <
\infty \Big\}.
\]

If $w$ is regular, an easier description of $d(w,p)^{*}$ with $p>1$ can be given. In this case we
have in \cite{All78} and \cite{Re82} that
\[
d(w,p)^{*} =
\Big\{ x \colon \bigg( \frac{x^{\star}(k)}{w(k)^{1/p}} \bigg)_{k=1}^{\infty} \in \ell_{p'} \Big\}.
\]
The $\ell_{p'}$ norm of this sequence is a positive homogeneous
function of $x$ which, although not a norm, is equivalent to
the norm in $d(w,p)^{*}$ (see \cite[Theorem 1]{Re82}).\\

\medskip
Lorentz spaces $d(w,p)$ are reflexive whenever $p>1$ \cite[Sect 4.e]{LibroLiTz1}. If
$p=1$ the predual of $d(w,1)$ can be described as (see \cite{Sa60, Ga66})
\[
d_{*}(w,1)=\left\{ x \in c_{0} \colon \lim_{N \to \infty}
\frac{\sum_{k=1}^{N} x^{\star}(k)}{\sum_{k=1}^{N} w(k)} = 0 \right\}
\]
with the norm $\|x\| = \sup_{N} \frac{\sum_{k=1}^{N} x^{\star}(k)}
{\sum_{k=1}^{N} w(k)}$.

Let us recall that, given a strictly positive, increasing sequence
$\Psi$ such that $\Psi(0)=0$, the associated Mar\-cinkiewicz
sequence space $m_\Psi$ (see \cite[Definition 4.1]{KaLe06}, also
\cite{ChHa06,KaLe04}) consists of all sequences $(x(k))_{k}$ such
that
\[
\Vert x \Vert_{m_{\Psi}} = \sup_{N} \frac{\sum_{k=1}^{N}
x^{\star}(k)} {\Psi (N)} < \infty .
\]

In order to prove part (a) of Theorem \ref{lorentz} we make use of
a general result. Let us recall first that if $E$ is a symmetric
Banach sequence space, its $n$-concavification $E_{(n)}$ (see
\cite[Section 1.d]{LibroLiTz2}) is defined as the set consisting of
those sequences $(z(k))_{k}$ so that $(\vert z(k) \vert^{1/n}
)_{k} \in E$. On $E_{(n)}$ we can define a symmetric quasi-norm by
$\Vert z \Vert_{E_{(n)}}= \Vert (\vert z(k)
\vert^{1/n} )_{k} \Vert_{E}^n$. This quasi-norm verifies the ``monotonicity condition": if $z \in \mathbb{C}^{\mathbb{N}}$ and $w \in E_{(n)}$
are such
that $\vert z(k) \vert \leq \vert w(k) \vert$ for all $k \in \mathbb{N}$ then $z \in E_{(n)}$ and $\Vert z \Vert_{E_{(n)}} \leq
\Vert w \Vert_{E_{(n)}}$.
If $E$ is $n$-convex and $\mathbf{M}^{(n)} (E)=1$, then $\Vert \cdot \Vert_{E_{(n)}} $ is actually a norm and $E_{(n)}$ turns out to be
a symmetric Banach sequence space.

We can now give the result we need. This could be deduced from a result on orthogonally additive polynomials on Banach lattices given in
\cite[Theorem 2.3]{BeLaLla06}. However, in our setting (symmetric Banach sequence spaces) it is easier to give a direct proof. Note that the
K\"othe dual is by definition
the set in which we have some H\"older inequality. In
\eqref{Hoelder Bss} we aim to an $n$-linear H\"older inequality; it is no surprise then that the K\"othe dual of
$E_{(n)}$ appears.

\begin{lemma} \label{diag L n-cvx}
Let $\alpha \in \ell_{\infty}$ and $E$ be a symmetric Banach sequence space, then \eqref{Hoelder Bss} holds
if and only if $ \alpha \in (E_{(n)})^{\times}$ and
the best constant in \eqref{Hoelder Bss} is $\Vert \alpha \Vert_{(E_{(n)})^{\times}}$.
\end{lemma}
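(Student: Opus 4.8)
The plan is to unwind both sides so that the inequality \eqref{Hoelder Bss} becomes, up to a substitution, the definition of membership in a K\"othe dual. The key observation is that the diagonal $n$-linear form $(x_1,\dots,x_n)\mapsto \sum_k \alpha(k)x_1(k)\cdots x_n(k)$ attains its supremum norm on sequences with nonnegative coordinates (replace each $x_i(k)$ by $|x_i(k)|$, which does not change the norm of $x_i$ in a symmetric — in particular solid — sequence space, and only increases the modulus of the sum), and then, by a standard rearrangement/AM–GM argument, on a single repeated argument $x_1=\dots=x_n=x$. Indeed, for nonnegative $x_1,\dots,x_n$ with $\|x_i\|_E\le 1$, setting $x(k):=(x_1(k)\cdots x_n(k))^{1/n}$ one has $x_1(k)\cdots x_n(k)=x(k)^n$ and, by the $n$-convexity-type inequality (or simply by the lattice AM–GM inequality $x(k)\le \frac1n\sum_i x_i(k)$ coordinatewise together with solidity and the triangle inequality), $\|x\|_E\le 1$ as well; conversely any nonnegative $x$ with $\|x\|_E\le1$ arises this way with $x_1=\dots=x_n=x$. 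Hence
\[
\sup\Big\{\Big|\sum_k \alpha(k)x_1(k)\cdots x_n(k)\Big| : \|x_i\|_E\le 1\Big\}
=\sup\Big\{\sum_k |\alpha(k)|\, x(k)^n : x\ge 0,\ \|x\|_E\le 1\Big\}.
\]

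Next I would translate the right-hand side through the $n$-concavification. Put $z(k):=x(k)^n\ge 0$; then $x=(|z(k)|^{1/n})_k$, so $\|z\|_{E_{(n)}}=\|x\|_E^n$, and the condition $\|x\|_E\le 1$ is exactly $\|z\|_{E_{(n)}}\le 1$. Therefore the displayed supremum equals
\[
\sup\Big\{\sum_k |\alpha(k)|\,z(k) : z\ge 0,\ \|z\|_{E_{(n)}}\le 1\Big\}
=\sup\Big\{\sum_k |\alpha(k)\,\zeta(k)| : \|\zeta\|_{E_{(n)}}\le 1\Big\},
\]
where the last equality uses the monotonicity condition on the quasi-norm $\|\cdot\|_{E_{(n)}}$ noted in the text (passing from a general $\zeta\in E_{(n)}$ to $|\zeta|$ does not increase the norm and does not change $\sum_k|\alpha(k)\zeta(k)|$). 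By the definition of the K\"othe dual recalled in Section 2 (and the observation there that $\|\alpha\|_{E^\times}$ equals this supremum, valid also for quasi-normed $E$), the right-hand side is finite if and only if $\alpha\in (E_{(n)})^\times$, and in that case it equals $\|\alpha\|_{(E_{(n)})^\times}$. Chaining the three displays, \eqref{Hoelder Bss} holds precisely when $\alpha\in(E_{(n)})^\times$, with best constant $\|\alpha\|_{(E_{(n)})^\times}$, which is the assertion.

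The only delicate point is the reduction to the diagonal argument $x_1=\dots=x_n=x$ when $E$ is merely a symmetric Banach sequence space without the normalization $\mathbf{M}^{(n)}(E)=1$: a priori $\|\cdot\|_{E_{(n)}}$ is only a quasi-norm. This is harmless here because we never use the triangle inequality for $\|\cdot\|_{E_{(n)}}$ — only its solidity (the monotonicity condition) and the bookkeeping identity $\|x\|_E^n=\|x^n\|_{E_{(n)}}$, both of which hold unconditionally. The AM–GM step, $\big\|(x_1\cdots x_n)^{1/n}\big\|_E\le \big\|\tfrac1n\sum_i x_i\big\|_E\le \tfrac1n\sum_i\|x_i\|_E$, uses only solidity and the norm (not quasi-norm) triangle inequality in $E$ itself, so it goes through verbatim. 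Thus I expect the argument to be short and the "obstacle" to be purely a matter of stating the rearrangement reduction cleanly rather than any real difficulty.
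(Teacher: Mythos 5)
Your argument is correct and follows essentially the same route as the paper: reduce to the diagonal argument via the coordinatewise AM--GM inequality plus solidity, substitute $z(k)=x(k)^n$ to land in $E_{(n)}$, and identify the resulting supremum with $\Vert \alpha\Vert_{(E_{(n)})^{\times}}$ via the sup formula for the K\"othe dual norm (the paper merely packages this as two separate implications rather than one chain of suprema). One small caveat: the phrase ``replacing each $x_i(k)$ by $|x_i(k)|$ only increases the modulus of the sum'' is not literally true for complex (or sign-changing) $\alpha$; the standard fix, which the paper also uses implicitly through its Section~2 description of $\Vert\cdot\Vert_{E^{\times}}$, is to absorb the phase of $\alpha(k)$ into one of the coordinates, which leaves all norms unchanged by symmetry and makes your first displayed equality exact.
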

\begin{proof}
Let $\alpha$ satisfy \eqref{Hoelder Bss}; then there exists $C>0$ such that for every $x \in E$,
\[
\Big\vert \sum_{k=1}^{\infty} \alpha(k) x(k)^{n} \Big\vert \leq C \Vert x \Vert^{n}.
\]
This implies that $\sum_{k} \alpha(k) z(k)$ is finite for every $z
\in E_{(n)}$ hence $\alpha \in (E_{(n)})^{\times}$ and $\Vert \alpha \Vert_{(E_{(n)})^{\times}} \leq C$.

On the other
hand, if $\alpha \in (E_{(n)})^{\times}$ let us take $x_{1} ,
\ldots , x_{n} \in E$. Note first that the inequality
\begin{equation}\label{medias}
(|x_{1}(k)| \cdots |x_{n}(k)|)^{1/n} \leq \frac{|x_{1}(k)| +
\cdots + |x_{n}(k)|}{n}
\end{equation}
implies that $\big((x_{1}(k) \cdots x_{n}(k))^{1/n}\big)_k \in E$
and then $z:=\big(x_{1}(k) \cdots x_{n}(k)\big)_k \in E_{(n)}$. As a consequence of \eqref{medias} we have
$\|z\|_{E_{(n)}} \leq \|x_{1}\|_{E} \cdots \|x_{n}\|_{E}$. Therefore
\begin{multline*}
\Big\vert \sum_{k=1}^{N}  \alpha(k) x_{1} (k) \cdots x_{n}(k) \Big\vert
\leq \sum_{k=1}^{N} | \alpha(k) x_{1} (k) \cdots x_{n}(k)| \\
\leq \| \alpha \|_{(E_{(n)})^{\times}} \|z\|_{E_{(n)}}
\leq \| \alpha \|_{(E_{(n)})^{\times}} \|x_{1}\|_{E} \cdots \|x_{n}\|_{E}
\end{multline*}
holds for every $N$. Thus \eqref{Hoelder Bss} is verified with $C=\| \alpha \|_{(E_{(n)})^{\times}}$ and this completes the proof.
\end{proof}

The last inequality in the previous proof can be seen as an estimation of the norm of a multilinear form. Let us say that a multilinear form
$T$ on a sequence space $E$ is called diagonal if there exists a sequence $\alpha$ such
that for every $x_{1}, \ldots, x_{n} \in E$
\[
T(x_{1}, \ldots, x_{n}) = \sum_{k=1}^{\infty} \alpha (k) x_{1}(k)
\cdots x_{n}(k).
\]
In this case we write $T=T_{\alpha}$. With this terminology, Lemma~\ref{diag L n-cvx} states that diagonal $n$-linear forms on $E$ correspond to
sequences $\alpha \in (E_{(n)})^\times$ and $$\|T_\alpha\|=\|\alpha\|_{(E_{(n)})^\times}.$$

The
$n$-homogeneous polynomial associated to $T_{\alpha}$ is also called
diagonal and is denoted $P_\alpha$.

\begin{remark} \label{igualnorm}
We observe in (\ref{polariz}) the general relationship between the
norms of a polynomial and its associated symmetric $n$-linear
form. For diagonal forms and polynomials defined on a  symmetric
Banach sequence space $E$ the situation is different.  It is
proved in the previous lemma that, if $x_1,\dots, x_n$ are in $E$
then $\big(\big(x_1(k)\cdots x_n(k)\big)^{1/n}\big)_k$ also
belongs to $E$ and
\[
\|\big(\big(x_1(k)\cdots x_n(k)\big)^{1/n}\big)_k\|^n\leq
\|x_1\|\dots\|x_n\|
\]
Then, the norm of any multilinear diagonal form on $E$ coincides
with the norm of its associated diagonal polynomial, that is
$\|T_\alpha\|=\|P_\alpha\|$.
\end{remark}

Lemma~\ref{diag L n-cvx} provides an abstract characterization of the sequences $\alpha$ such that inequality~(\ref{Hoelder Bss}) is verified.
However, the K\"othe dual of the $n$-concavification of $E$ is not always the simplest way to obtain an explicit description of such sequences.
Therefore, in some cases we will use different approaches.

Now we prove our first theorem.

\subsection*{Proof of Theorem \ref{lorentz}}
For the statement (a), since $n\le p$, the $n$-concavification of $d(w,p)$ is the space $d(w,p/n)$. Then Lemma \ref{diag L n-cvx} gives the conclusion.

For the statement (b), let $\alpha$ and $C>0$ satisfy
\eqref{Hoelder Bss} with $E=d(w,p)$. For any fixed $N \in
\mathbb{N}$, let $J_{N} \subseteq \mathbb{N}$ be such that
$|J_{N}|=N$ then for any $\lambda_{1}, \dots , \lambda_{n} \in
\mathbb{C}$ with $|\lambda_{k}|=1$,
\[
\Big\vert \sum_{k\in J_{N}} \alpha(k) \lambda_{k}^{n} \Big\vert
\leq C \Big\Vert \sum_{k\in J_{N}} \lambda_{k} e_{k}
\Big\Vert_{d(w,p)}^{n} =  C \Big(\sum_{k=1}^{N} w(k) \Big)^{n/p}.
\]
Choosing $\lambda_{k}$ and $J_{N}$ so that $\sum_{k\in J_{N}}
\lambda_{k}^{n} \alpha(k) = \sum_{k=1}^{N} \alpha^{\star}(k)$ we
get, for any $N$,
\[
\frac{\sum_{k=1}^{N} \alpha^{\star}(k)} {\big( \sum_{k=1}^{N} w(k) \big)^{n/p}} \leq C.
\]
Thus, $\alpha\in m_\Psi$, with $\Psi(N)=\big( \sum_{k=1}^{N} w(k)
\big)^{n/p}$.

For the reverse inclusion, let $\alpha\in m_\Psi$. Without loss
of generality we can assume $\alpha = \alpha^{\star}$. Let us
consider the diagonal $n$-linear mapping $T_{\alpha} : d(w,p)
\times \cdots \times d(w,p) \rightarrow \mathbb{C}$. By Remark
\ref{igualnorm}, $T_{\alpha}$ is continuous if and only if the
associated polynomial $P_{\alpha}:d(w,p)\to\ell_n$ is continuous,
and their norms are equal. First of all
\[
|P_{\alpha} (x)| = \Big| \sum_{k=1}^{\infty} \alpha(k) x(k)^n \Big| \leq \sum_{k=1}^{\infty} \alpha(k) \ x^{\star}(k)^{n}.
\]
If we prove that
\begin{equation} \label{induc1}
\sum_{k=1}^{N} \alpha(k) \ x^{\star}(k)^{n} \leq
\|\alpha\|_{m_\Psi} \Big(\sum_{k=1}^{N} w(k) \  x^{\star}(k)^{p}
\Big)^{n/p}
\end{equation}
holds for every $N$, then we will have $|P_{\alpha}(x)| \leq
\|\alpha\|_{m_\Psi} \|x\|_{d(w,p)}^{n}$
and the result will follow.

We can assume that $x=x^{\star}$. By the definition of $m_\Psi$ we
have
\begin{align*}
\sum_{k=1}^{N} \alpha(k) x(k)^{n} & = \sum_{i=1}^{N-1} \Big(
\sum_{k=1}^{i} \alpha(k) \Big) (x(i)^{n}-x(i+1)^{n})
+ \Big( \sum_{k=1}^{N} \alpha(k) \Big) x(N)^{n} \\
& \le \|\alpha\|_{m_\Psi} \sum_{i=1}^{N} \Psi(i)
(x(i)^{n}-x(i+1)^{n})
+ \|\alpha\|_{m_\Psi} \Psi(N) x(N)^{n} \\
& = \|\alpha\|_{m_\Psi}\Big[ \Psi(1) x(1)^{n} + \sum_{i=2}^{N}
\big(\Psi(i)-\Psi(i-1)\big) x(i)^{n}\Big].
\end{align*}
To obtain \eqref{induc1}, we need to prove that for every $N$, the following inequality holds:
\begin{equation} \label{induc2}
\Psi(1) x(1)^{n} + \sum_{i=2}^{N} \big(\Psi(i)-\Psi(i-1)\big)
x(i)^{n} \leq \Big( \sum_{k=1}^{N} w(k) \  x(k)^{p} \Big)^{n/p}.
\end{equation}
We proceed by induction. For $N=1$, the inequality is obvious. By
the induction hypothesis we have
\begin{equation*}\begin{split}
\Psi(1) x(1)^{n} & + \sum_{i=2}^{N+1} \big(\Psi(i)-\Psi(i-1)\big)
x(i)^{n} \\ & \leq \Big( \sum_{k=1}^{N} w(k) \  x(k)^{p} \Big)^{n/p} +
\big(\Psi(N+1)-\Psi(N)\big) x(N+1)^{n}.
\end{split}\end{equation*}
We want to show that the last expression is at most
$\big( \sum_{k=1}^{N+1} w(k) \  x(k)^{p} \big)^{n/p}$.
Equivalently, we have to prove
\begin{equation}\begin{split}
&\Psi(N+1)-\Psi(N) \\ & \leq \left( \sum_{k=1}^{N+1} w(k) \
\Big(\frac{x(k)}{x(N+1)}\Big)^{p} \right)^{n/p} -\left(
\sum_{k=1}^{N} w(k) \ \Big(\frac{x(k)}{x(N+1)}\Big)^{p}
\right)^{n/p}.\label{menor}
\end{split}\end{equation}
Consider the increasing function $\phi(t)=(t+w(N+1))^{n/p} -
t^{n/p}$ (recall that $n\ge p$). Since $x$ is decreasing,
$\sum_{k=1}^{N} w(k) \leq \sum_{k=1}^{N} w(k)
\Big(\frac{x(k)}{x(N+1)}\Big)^p$. Hence
\[
\phi \Big(\sum_{k=1}^{N} w(k)\Big) \leq \phi \Big(\sum_{k=1}^{N}
w(k) \Big(\frac{x(k)}{x(N+1)}\Big)^p \Big),
\]
but this is exactly what we want in \eqref{menor}.  This gives
\eqref{induc2}, hence \eqref{induc1} holds and the result
follows.

If in addition $w$ is $n/(n-p)$-regular, then it is easy to see
that $m_\Psi$ is isomorphic to $\ell_\infty$. This completes the proof.
\qed

\begin{remark} \label{obs2}
It is known (and can be deduced, for example, from
\cite[Lemma~3.3]{KaLe04}) that $m_\Psi$ is isomorphic to the dual
of a Lorentz space $d(\overline w,1)$ for some sequence $\overline
w$, understanding  $d(\overline w,1)=\ell_1$ if $\overline w$ is
not a null sequence.

In some cases, the sequence $\overline w$ can be determined. For
example, for $\breve{w}(k)=\Psi(k)-\Psi(k-1)$, we have
$$\frac{\sum_{k=1}^{N} \alpha^{\star}(k)}{\Psi(N)}
= \frac{\sum_{k=1}^{N} \alpha^{\star}(k)}{\sum_{k=1}^{N} \breve{w}(k)}.$$
If
$\breve{w}$ is decreasing, we obtain that \eqref{Hoelder Bss}
holds for $E=d(w,p)$ if and only if $\alpha \in
d(\breve{w},1)^{*}$. Moreover, there are universal constants
$A_{n}, B_{n}$  (not depending on $\alpha$) so that the best $C>0$
in \eqref{Hoelder Bss} satisfies $A_{n} \Vert \alpha
\Vert_{d(\breve{w},1)^{*}} \leq C \leq B_{n} \Vert \alpha
\Vert_{d(\breve{w},1)^{*}}$.

If $w$ is regular (i.e., 1-regular) and the sequence
$\tilde{w}(k)=\frac{(k w(k))^{n/p}}{k}$ is decreasing we get
another description, namely $m_\Psi = d(\tilde{w},1)^{*}$. Indeed,
by the mean value theorem
\[
\Psi(k)-\Psi(k-1) = \frac{n}{p} z(k)^{n/p-1} w(k)
\]
for some $ \sum_{j=1}^{k-1} w(j) \leq z(k) \leq
 \sum_{j=1}^{k} w(j) $. But $ \sum_{j=1}^{k} w(j)
\asymp k w(k)$ and $\sum_{j=1}^{k-1} w(j) \asymp (k-1) w(k-1) \geq
(k-1) w(k) \succ k w(k)$. So we have $z(k) \asymp (k{w}(k))$.
Consequently, $\breve{w}(k) \asymp (k w(k))^{n/p-1} w(k) =
\tilde{w}(k)$ and, since $(\tilde{w}(k))_{k}$ is decreasing, we have that
$m_\Psi = d(\tilde{w},1)^{*}$. Hence, in this case, \eqref{Hoelder
Bss} holds if and only if $\alpha \in d(\tilde{w},1)^{*}$.

Note that $\breve{w}(k) \asymp \tilde{w}(k)$ if and only if $w$ is
regular. Also, if either  $\breve w$ or $\tilde w$ is decreasing but does not
converge to zero, then the corresponding Lorentz space $d(\cdot,
1)$ is in fact $\ell_1$ and then its dual is $\ell_\infty$.
\end{remark}

In the following example we apply our results to the Lorentz
sequence spaces $\ell_{p,q}$. For the particular case $q<n<p$,
this example shows that the regularity condition in part (b) of
Theorem~\ref{lorentz} is sharp: for any $r<n/(n-p)$ there are
$r$-regular sequences $w$ so that \eqref{Hoelder Bss} does not
hold for some $\alpha \in \ell_{\infty}$ and $E=d(w,p)$.

\begin{example}\label{lpq}
Special cases of Lorentz sequence spaces are the $\ell_{p,q}$
spaces. For $p>q\ge 1$ these spaces are defined as
\[
\ell_{p,q}= \Big\{x: \|x\|= \Big(\sum_{k=1}^\infty \frac{(x^\star(k))^q}{k^{1-\frac q p}}\Big)^{1/q} <\infty \Big\}.
\]
The space $\ell_{p,q}$ is the Lorentz sequence space $d(w,q)$ with $w(k)=k^{q/p-1}$.

We apply the above results to these particular spaces. By part (a)
of Theorem~\ref{lorentz}, we obtain for $n\le q$, that \eqref{Hoelder Bss} holds for $E=\ell_{p,q}$ if and only if
$\alpha \in (\ell_{\frac p n,\frac q n})^*$.

If $n\ge p$, since $\ell_{p,q}\hookrightarrow\ell_n$, we have that
\eqref{Hoelder Bss} holds if and only if $\alpha \in \ell_\infty$.

Finally, for $q<n<p$ we can apply part (b) of
Theorem~\ref{lorentz}. However, since $w$ is regular and $\tilde
w(k)=\frac{(kw(k))^{n/q}}k=k^{n/p-1}$ is a decreasing sequence,
Remark~\ref{obs2} gives that  \eqref{Hoelder Bss} holds if and
only if $\alpha \in d(\tilde w,1)^*=(\ell_{\frac p n ,1})^*$.

It is easy to check that the sequence $(k^{q/p-1})_k$ is
$r$-regular if and only if $r<p/(p-q)$. Therefore, for any
$r<n/(n-q)$ we can find $p>n$ such that $r<p/(p-q)$. In this case,
the sequence associated to $\ell_{p,q}$ is $r$-regular but
\eqref{Hoelder Bss} does not hold for some $\alpha \in
\ell_\infty$.
\end{example}

\section{Duals of Lorentz spaces} \label{Sect duales}
We give now the proof of Theorem \ref{duales 2}. We have seen in
Section~\ref{Sect Lorentz} that using $n$-linear diagonal forms
can sometimes be helpful.  In the same spirit, an operator $D \in
\mathcal{L} (E;F)$ between Banach sequence spaces is called
diagonal if there exists a sequence $\sigma$ such that
$D(x)=(\sigma(k) x(k))_{k=1}^{\infty}$; in this case we write
$D=D_{\sigma}$. Some relationship between diagonal operators and
diagonal $n$-lineal forms is shown in the following lemma, that we
will need later.

\begin{lemma} \label{norma lambda}
Let $E$ be a symmetric Banach sequence space and $T_{\alpha} : E
\times \cdots \times E \rightarrow \mathbb{C}$ a diagonal
$n$-linear form. Let $D_{\sigma} : E \rightarrow \ell_{n}$ be the
diagonal operator associated to $\sigma = \alpha^{1/n}$
(coordinatewise). Then $T_{\alpha}$ is continuous if and only if
$D_{\sigma}$ is continuous and
\[
\| T_{\alpha} \|=\| D_{\sigma} \|^{n} .
\]
\end{lemma}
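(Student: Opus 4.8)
The plan is to prove the two inequalities $\|T_\alpha\| \le \|D_\sigma\|^n$ and $\|D_\sigma\|^n \le \|T_\alpha\|$ separately, using the already-established fact (Remark~\ref{igualnorm}) that the norm of a diagonal $n$-linear form on a symmetric Banach sequence space equals the norm of its associated diagonal polynomial, $\|T_\alpha\| = \|P_\alpha\|$. The key observation linking everything is that the diagonal polynomial $P_\alpha \colon E \to \mathbb{C}$ factors through $D_\sigma \colon E \to \ell_n$: for $x \in E$ one has $P_\alpha(x) = \sum_k \alpha(k) x(k)^n = \sum_k (\sigma(k) x(k))^n$, which is exactly $Q\big(D_\sigma(x)\big)$ where $Q \colon \ell_n \to \mathbb{C}$ is the diagonal $n$-homogeneous polynomial $Q(y) = \sum_k y(k)^n$ with coefficients all equal to $1$.

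First I would check that $Q$ has norm exactly $1$ on $\ell_n$: by the triangle inequality and then $\sum_k |y(k)|^n = \|y\|_{\ell_n}^n$ we get $|Q(y)| \le \sum_k |y(k)|^n \le \|y\|_{\ell_n}^n$, so $\|Q\|\le 1$, and testing on $y=e_1$ gives $\|Q\|=1$. Consequently $\|P_\alpha\| = \sup_{\|x\|_E\le 1} |Q(D_\sigma x)| \le \|Q\| \, \|D_\sigma\|^n = \|D_\sigma\|^n$, which together with Remark~\ref{igualnorm} yields $\|T_\alpha\| \le \|D_\sigma\|^n$ (this also shows $T_\alpha$ is continuous whenever $D_\sigma$ is).

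For the reverse inequality, assume $T_\alpha$ (equivalently $P_\alpha$) is continuous. Given $x \in E$ with $\|x\|_E \le 1$, I want to bound $\|D_\sigma x\|_{\ell_n}^n = \sum_k |\sigma(k)|^n |x(k)|^n = \sum_k |\alpha(k)| |x(k)|^n$. The point is to remove the absolute value on $\alpha(k)$: since $E$ is a symmetric sequence space, for any choice of unimodular scalars $(\varepsilon(k))_k$ the sequence $(\varepsilon(k) x(k))_k$ has the same norm as $(x(k))_k$, so choosing $\varepsilon(k)^n = \overline{\alpha(k)}/|\alpha(k)|$ (when $\alpha(k)\neq 0$, arbitrary otherwise) we get $\sum_k |\alpha(k)| |x(k)|^n = \sum_k \alpha(k) (\varepsilon(k) x(k))^n = P_\alpha\big((\varepsilon(k)x(k))_k\big) \le \|P_\alpha\| \, \|( \varepsilon(k) x(k))_k\|_E^n = \|P_\alpha\|$. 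Taking the supremum over $\|x\|_E \le 1$ gives $\|D_\sigma\|^n \le \|P_\alpha\| = \|T_\alpha\|$, and $D_\sigma$ is continuous. One small technical point to handle carefully is that $\sigma = \alpha^{1/n}$ is only defined up to a choice of $n$-th root and these might not give a genuine sequence with $\sigma(k)^n = \alpha(k)$ consistently — but since $|\sigma(k)|^n = |\alpha(k)|$ is all that enters the $\ell_n$-norm computation, and since the unimodular-scalar argument above is insensitive to the particular root chosen, this causes no difficulty. I do not expect any serious obstacle here; the argument is a clean factorization plus the symmetry of $E$, and the only thing to be mildly attentive to is the finite-support truncation (work with $\sum_{k=1}^N$ and let $N\to\infty$, exactly as in the proof of Lemma~\ref{diag L n-cvx}) so that all the manipulations are with genuinely convergent sums.
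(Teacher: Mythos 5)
Your proposal is correct and follows essentially the same route as the paper: both directions rest on $\|T_\alpha\|=\|P_\alpha\|$ from Remark~\ref{igualnorm}, with the upper bound coming from the pointwise estimate $|P_\alpha(x)|\le\sum_k|\alpha(k)||x(k)|^n=\|D_\sigma x\|_{\ell_n}^n$ (which you phrase as a factorization through the norm-one polynomial $Q$ on $\ell_n$) and the lower bound from rotating coordinates by unimodular scalars, which leaves the norm in $E$ unchanged. The only nitpick is that your choice $\varepsilon(k)^n=\overline{\alpha(k)}/|\alpha(k)|$ only cancels the phase of $\alpha(k)$, not that of $x(k)^n$; either take $x\ge 0$ without loss of generality or fold the phase of $x(k)$ into $\varepsilon(k)$, exactly as the paper does by restricting the supremum to $\alpha(k)x(k)^n\ge 0$.
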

\begin{proof}
If $P_\alpha$ is the $n$-homogeneous polynomial associated to
$T_\alpha$, by Remark~\ref{igualnorm}, we have that $\|T_{\alpha}
\|=\|P_\alpha\|\leq \| D_{\sigma} \|^{n}$.

On the other hand, if $|\lambda(k)|=1$ for all $j$, then
$\|(\lambda(k) x(k))_{k}\|_{E} = \|x\|_{E}$ and
\[
\| T_{\alpha} \|  \geq \sup_{\|x\|_{E}\leq 1 \atop \alpha(k)x(k)^{n}\geq 0} \Big| \sum_{k=1}^{\infty} \alpha(k)\  x(k)^{n} \Big|
=  \sup_{\|x\|_{E}\leq 1} \sum_{k=1}^{\infty} |\alpha(k)| \ |x(k)|^{n} = \| D_{\sigma} \|^{n}.
\]
\end{proof}

\noindent Now we are ready to proof our theorem for duals of Lorentz spaces.
\subsection*{Proof of Theorem \ref{duales 2}}
Part (a) follows from Remark \ref{eleinf} and the fact that $d(w,p)^{*}$ is
$n$-concave if and only if $d(w,p)$ is $n'$-convex and this
happens if and only if $1 \leq n' \leq p$. In this case we
have that the $n$-concavity constant $\mathbf M_{(n)}(d(w,p)^*)$ is $1$. Since the norm of a diagonal multilinear form coincides with the
norm of its associated polynomial, the best constant is
$\|\alpha\|_\infty$.

To get part (b), let us take $\alpha \in \ell_{\infty}$ and $\sigma =
\alpha^{1/n}$. If $D_{\sigma} : d(w,p)^{*} \rightarrow \ell_{n}$
is the diagonal operator associated to $\sigma$ and $D'_{\sigma} :
\ell_{n'} \rightarrow d(w,p)$ is the adjoint operator, we want to
show that
\begin{equation} \label{D-alpha}
\| D'_{\sigma} \|= \| \alpha \|_{d(w^{\frac{n'}{n'-p}}, \frac{p'}{p'-n})}^{1/n}.
\end{equation}
If this is the case, then by Lemma \ref{norma lambda}
\[
\| \alpha \|_{d(w^{\frac{n'}{n'-p}}, \frac{p'}{p'-n})} = \| D'_{\sigma} \|^{n}
= \| D_{\sigma} \|^{n} = \Vert T_{\alpha} \Vert
\]
and for every $x_{1}, \ldots , x_{n} \in d(w,p)^{*}$,
\[
\Big\vert \sum_{k} \alpha (k) x_{1}(k) \cdots x_{n}(k) \Big\vert
\leq \| \alpha \|_{d(w^{\frac{n'}{n'-p}}, \frac{p'}{p'-n})} \Vert x_{1} \Vert \cdots \Vert x_{n} \Vert.
\]
Hence, \eqref{Hoelder Bss} holds if and only if $\alpha \in d(w^{\frac{n'}{n'-p}}, \frac{p'}{p'-n})$ and the best constant is the norm of
$\alpha$ in this space. Let us then  show that \eqref{D-alpha} holds. First,
\[
\| D'_{\sigma} (x) \| = \| ( \sigma (k) x(k))_{k} \|_{d(w,p)}
=\sup_{\pi \in \Sigma_{\mathbb{N}}} \Big( \sum_{k} \big|
\alpha(\pi(k))^{1/n} x(\pi(k)) \big|^{p} w(k) \Big)^{1/p}.
\]
Using H\"older's inequality with $n'/p$ and $n'/(n'-p)$ we obtain,
for each $\pi \in \Sigma_{\mathbb{N}}$,
\begin{equation*}\begin{split}
\Big( \sum_{k}   \big| \alpha(\pi(k))^{1/n}  &  x(\pi(k)) \big|^{p} w(k)
\Big)^{1/p} \\ & \leq \Big( \sum_{k} |x(\pi(k))|^{n'} \Big)^{1/n'}
\Big( \sum_{k} | \alpha(\pi(k)) |^{\frac{p'}{p'-n}} w(k)^{\frac{n'}{n'-p}} \Big)^{\frac{n'-p}{n'p}} \\
& \leq \| x \|_{\ell_{n'}} \ \Big( \sum_{k} \alpha^{\star}(k)^{\frac{p'}{p'-n}} w(k)^{\frac{n'}{n'-p}} \Big)^{\frac{p'-n}{p'n}}.
\end{split}\end{equation*}
Hence $\| D'_{\sigma} \| \leq \| \alpha \|_{d(w^{\frac{n'}{n'-p}}, \frac{p'}{p'-n})}^{1/n}$. Let us see now
that this value is attained. Since all the involved spaces are symmetric we can assume without loss of generality that
$\alpha = \alpha^{\star}$. Then let us consider
\[
x_{N}(k) = \frac{\alpha(k)^{\frac{p}{(n'-p)n}}
w(k)^{\frac{1}{n'-p}}} {\left( \sum_{i=1}^{N}
\alpha(i)^{\frac{p'}{p'-n}} w(i)^{\frac{n'}{n'-p}} \right)^{1/n'}}
\]
for $k=1, \dots , N$. It is easily seen that $\|
(x_{N}(k))_{k=1}^{N} \|_{\ell_{n'}}=1$ and
\begin{align*}
\| D'_{\sigma} (x_{N}) \|_{d(w,p)}
= \Big( \sum_{k=1}^{N} \alpha(k)^{\frac{p'}{p'-n}} & w(k)^{\frac{n'}{n'-p}} \Big)^{1/p-1/n'} \\
& = \Big\| \sum_{k=1}^{N} \alpha(k) e_{j} \Big\|_{d(w^{\frac{n'}{n'-p}}, \frac{p'}{p'-n})}^{1/n}.
\end{align*}
Therefore $\big\| \sum_{k=1}^{N} \alpha(k) e_{j} \big\|_{d(w^{\frac{n'}{n'-p}}, \frac{p'}{p'-n})}^{1/n} \leq \|D'_{\sigma}\|$
for all $N$ and the result follows.

Statement (c) follows similarly. \qed

\section{A general approach}
We have seen in Sections \ref{Sect Lorentz} and \ref{Sect duales} that considering diagonal $n$-linear forms helps
in proving H\"older-type inequalities. In fact, if in \eqref{Hoelder Bss} we take the supremum over $\Vert x_{i} \Vert_{E} \leq 1$,
$i=1, \ldots , n$ then we have that the best constant in \eqref{Hoelder Bss} is precisely $\Vert T_{\alpha} \Vert$. We see that
our problem is closely related with determining the norm of diagonal $n$-linear forms. This sits very much in the
philosophy of considering norms of diagonal multilinear forms in different ideals presented in \cite{CaDiSe06, CaDiSe07} and
motivates us to broaden our framework.

\medskip
Following \cite{Ko75} for the linear case and \cite{CaDiSe07} for
the multilinear case, if $\mathfrak{A}$ is a Banach ideal of
multilinear mappings we consider, for each $n \in \mathbb{N}$, the
space
\[
\ell_{n}(\mathfrak{A},E):=\{\alpha\in \ell_\infty : T_{\alpha} \in \mathfrak{A}(^n  E)\}.
\]
With the norm $\|\alpha\|_{\ell_{n}(\mathfrak{A},E)}=\|T_{\alpha}\|_{\mathfrak{A}(^n E)}$ this
is a symmetric Banach sequence space whenever $E$ is so.\\
If $\mathcal{L}$ denotes the ideal of all multilinear forms, then \eqref{Hoelder orig} can be rewritten as
\[
\ell_{n}(\mathcal{L},\ell_{p}) \stackrel{1}=
\begin{cases}
\ell_{\infty} & \text{ if } 1\leq p \leq n \\
\ell_{p/(p-n)} & \text{ if } n<p<\infty
\end{cases}
\]
and our results Theorem \ref{lorentz} and \ref{duales 2} can be summarized as
\begin{gather*}
\ell_{n} (\mathcal{L}, d(w,p))\stackrel{1}=
\begin{cases}
 d(w,p/n)^{*} & \text{ if } n \leq p \\
 m_{\Psi} & \text{ if } n>p
\end{cases}
\\
\ell_{n} (\mathcal{L}, d(w,p)^{*}) \stackrel{1}=
\begin{cases}
 \ell_{\infty} & \text{ if } n' \leq p \\
 d(w^{\frac{n'}{n'-p}}, \frac{p'}{p'-n}) & \text{ if } n'>p>1\\
 d(w^n,1) & \text{ if } p=1
\end{cases}
\end{gather*}
where $\Psi(N)=\big( \sum_{j=1}^{N} w(j) \big)^{n/p}$. If $n>p$
and $w$ is $n/(n-p)$-regular, then $\ell_{n} (\mathcal{L},
d(w,p))=\ell_{\infty}$.\\

Our aim in this section is to obtain descriptions of $\ell_{n} ( \mathfrak{A} , d(w,p))$ and $\ell_{n} ( \mathfrak{A} , d(w,p)^{*})$
for ideals other than $\mathcal{L}$. We will make use of some general facts. If $E$ is a Banach sequence space, we consider
the mapping $\Phi_{N} : E_{N}\times \cdots \times E_{N} \longrightarrow \mathbb{C}$ given by
\[
\Phi_{N}(x_{1},\dots , x_{n})= \sum_{k=1}^{N} x_{1}(k) \cdots x_{n}(k).
\]
Clearly $\|\Phi_N\|_{\mathfrak{A}(^nE)}=\lambda_{\ell_{n}(\mathfrak{A},E)}(N)$.

If $F$ and $G$ are symmetric Banach sequence spaces so that $F
\hookrightarrow  G$ then we have, by the closed graph theorem,
\[
\lambda_{G}(N) \prec \lambda_{F}(N).
\]
A weak converse of this fact can be obtained under certain assumptions. We need first a lemma.

\begin{lemma} \label{lema tecnico}
Let $F$ and $G$ be symmetric Banach sequence spaces and suppose
there exists $\alpha
>0$ be such that $\lambda_G(N) \prec
\lambda_{F}(N)^{\alpha}$. Then, for all $\varepsilon
>0$ we have $\left(
\dis\frac{1}{k^{\varepsilon} \lambda_{F}(k)^{\alpha}} \right)_{k \in
\mathbb{N}}\in G$ .
\end{lemma}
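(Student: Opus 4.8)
The plan is to exhibit the sequence $\beta = \big(1/(k^{\varepsilon}\lambda_F(k)^{\alpha})\big)_k$ as a convex combination of (rearrangements of) fundamental-function profiles of $F$, and then transport that representation to $G$ via the hypothesis $\lambda_G(N)\prec\lambda_F(N)^{\alpha}$. Concretely, since $\beta$ is decreasing and symmetric in the relevant sense, it suffices to control $\big\|\sum_{k=1}^N \beta^{\star}(k)e_k\big\|_G = \big\|\sum_{k=1}^N \beta(k)e_k\big\|_G$ uniformly in $N$, or more precisely to show $\beta\in G$ by a summation-by-parts / telescoping argument. First I would write, for a decreasing sequence $\beta$,
\[
\sum_{k=1}^{N}\beta(k)e_k = \sum_{j=1}^{N-1}\big(\beta(j)-\beta(j+1)\big)\sum_{k=1}^{j}e_k + \beta(N)\sum_{k=1}^{N}e_k,
\]
so that by the triangle inequality in $G$,
\[
\Big\|\sum_{k=1}^{N}\beta(k)e_k\Big\|_G \le \sum_{j=1}^{N-1}\big(\beta(j)-\beta(j+1)\big)\lambda_G(j) + \beta(N)\lambda_G(N).
\]
Now apply $\lambda_G(j)\prec\lambda_F(j)^{\alpha}\le j^{\varepsilon}\lambda_F(j)^{\alpha}$, i.e. $\lambda_G(j) \prec j^{\varepsilon}\lambda_F(j)^{\alpha} = 1/\beta(j)$, and reassemble by parts in the reverse direction to get that the sum is controlled by $C\sum_{j=1}^{N}\beta(j)\big(\frac{1}{\beta(j)} - \frac{1}{\beta(j-1)}\big)$ plus boundary terms.

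The key computational point is then that $\sum_{j}\beta(j)\big(\frac1{\beta(j)}-\frac1{\beta(j-1)}\big)$ converges. Writing $a(j) = 1/\beta(j) = j^{\varepsilon}\lambda_F(j)^{\alpha}$, this is $\sum_j \frac{a(j)-a(j-1)}{a(j)}$, and I would split the exponent $\varepsilon$ as $\varepsilon = \varepsilon/2 + \varepsilon/2$: the factor $k^{\varepsilon/2}$ is used to beat the increments $a(j)-a(j-1)$ (using that $\lambda_F$ grows at most linearly, since $\ell_1\hookrightarrow F$ forces $\lambda_F(N)\le N$, hence $a(j)-a(j-1)\prec j^{\varepsilon}\lambda_F(j)^{\alpha}/j = a(j)/j^{1-\varepsilon}\cdot$something — more carefully, $a(j)/a(j-1)\to 1$ at a rate that makes $\sum_j (1 - a(j-1)/a(j))$ comparable to $\sum_j 1/j^{1+\varepsilon/2}$ after using the spare $k^{\varepsilon/2}$), which converges. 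The cleanest route may actually be to avoid $G$-norm estimates of partial sums entirely: instead, since $\beta\in G$ iff $\sum_k \beta(k) z(k) <\infty$ for $z$ ranging over $G^{\times}$... no — rather, use directly that a decreasing $\beta$ lies in the symmetric space $G$ as soon as $\beta$ is dominated in the sense of the Calderón–Mityagin / Hardy–Littlewood majorization by an element of $G$, and that $\big(\lambda_G(k)/k\big)_k$-type sequences together with the estimate above furnish such a majorant after summing by parts once.

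The main obstacle I anticipate is making the summation-by-parts bookkeeping rigorous while only using that $G$ is a symmetric \emph{Banach} sequence space (so the triangle inequality and the rearrangement invariance are all that is available, not any concavity), and in particular handling the boundary term $\beta(N)\lambda_G(N)\prec \beta(N)/\beta(N) = 1$, which stays bounded but does not tend to zero — so I must argue convergence of the series $\sum_j(\beta(j)-\beta(j+1))\lambda_G(j)$ on its own and conclude $\beta\in G$ by monotone convergence of the increasing partial sums $\sum_{k=1}^N\beta(k)e_k$ in $G$, which requires knowing the limit is actually \emph{in} $G$ (not merely that norms are bounded). To close that gap I would note that $\ell_1\hookrightarrow G$ and $G$ is a Banach sequence space, so bounded increasing partial sums of a nonnegative sequence that already lies in $\ell_\infty$ do converge in $G$ precisely because the unit ball is closed under the coordinatewise order and $G$ has the Fatou-type property built into the definition ($\vert x(k)\vert\le\vert y(k)\vert$, $y\in G$ $\Rightarrow$ $x\in G$) applied to the supremum. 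The one genuine inequality to verify by hand is therefore just $\sum_{j\ge 1}\big(\beta(j)-\beta(j+1)\big)\,j^{\varepsilon}\lambda_F(j)^{\alpha} < \infty$, which follows from Abel summation plus $\lambda_F(j)\le j$ and the convergence of $\sum j^{-1-\varepsilon/2}$, using half of the exponent $\varepsilon$ as slack; I would present that estimate but not belabor the constants.
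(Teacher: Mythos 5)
Your Abel--summation strategy can in principle be pushed through, but as written the central computational claim is false and the argument does not close. After weakening $\lambda_G(j)\prec\lambda_F(j)^{\alpha}$ to $\lambda_G(j)\prec j^{\varepsilon}\lambda_F(j)^{\alpha}=a(j)$ you are left with showing that $\sum_j\frac{a(j)-a(j-1)}{a(j)}$ converges, and it does not: take $F=\ell_1$, so $\lambda_F(j)=j$ and $a(j)=j^{\varepsilon+\alpha}$, whence $\frac{a(j)-a(j-1)}{a(j)}=1-(1-\tfrac1j)^{\varepsilon+\alpha}\sim\frac{\varepsilon+\alpha}{j}$ and the series diverges. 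The mistake is precisely the step where you discard the factor $j^{-\varepsilon}$; you must keep $\lambda_G(j)\prec\lambda_F(j)^{\alpha}$ and prove $\sum_j\bigl(\beta(j)-\beta(j+1)\bigr)\lambda_F(j)^{\alpha}<\infty$. That estimate is true, but the ingredient you invoke ($\lambda_F(N)\le N$) is not the right one; what you need is the quasi-concavity of the fundamental function of a symmetric space, $\lambda_F(j+1)\le\frac{j+1}{j}\lambda_F(j)$ (average the $j+1$ vectors obtained from $\sum_{k\le j+1}e_k$ by deleting one coordinate), which gives $1-\frac{\beta(j+1)}{\beta(j)}\le1-\bigl(\tfrac{j}{j+1}\bigr)^{\varepsilon+\alpha}\le\frac{\varepsilon+\alpha}{j}$ and hence $\bigl(\beta(j)-\beta(j+1)\bigr)\lambda_F(j)^{\alpha}\prec\beta(j)\lambda_F(j)^{\alpha}j^{-1}=j^{-1-\varepsilon}$. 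Two further points: (i) your worry about the boundary term is unfounded, since $\beta(N)\lambda_G(N)\prec\beta(N)\lambda_F(N)^{\alpha}=N^{-\varepsilon}\to0$, so the partial sums are in fact Cauchy in $G$ and converge to $\beta$; but (ii) your fallback argument --- that uniformly bounded increasing partial sums of a nonnegative sequence must converge in $G$ by a ``Fatou property built into the definition'' --- is not valid under the paper's definition of a Banach sequence space, which only postulates the ideal property: $G=c_0$ is symmetric and the partial sums of the constant sequence $1$ are uniformly bounded, yet $1\notin c_0$. So you genuinely need the Cauchy argument (or a coordinatewise majorant already known to lie in $G$), not boundedness of norms.

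For comparison, the paper's proof avoids all of this bookkeeping by a dyadic-block decomposition: with $x_m=\sum_{2^m\le k<2^{m+1}}e_k$ one has $\|x_m\|_G=\lambda_G(2^m)\prec\lambda_F(2^m)^{\alpha}$ by symmetry, so the disjointly supported series $\sum_m 2^{-m\varepsilon}\lambda_F(2^m)^{-\alpha}x_m$ converges absolutely in $G$ (its norms are dominated by the geometric series $\sum_m 2^{-m\varepsilon}$), and this element dominates $\bigl(k^{-\varepsilon}\lambda_F(k)^{-\alpha}\bigr)_k$ coordinatewise because $k\ge2^m$ and $\lambda_F(k)\ge\lambda_F(2^m)$ on each block; the ideal property then finishes the proof. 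This route needs only monotonicity of $\lambda_F$, not its quasi-concavity, and never raises the convergence-of-partial-sums issue, so I would recommend adopting it or at least restructuring your argument around a coordinatewise majorant rather than norm bounds on partial sums.
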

\begin{proof}
For each $m \in \mathbb{N}\cup \{0\}$, we define $\mathbb{N}_{m} =
\{ k \in \mathbb{N} \ : \  2^{m} \leq k < 2^{m+1} \}$ and
\[
x_m = \sum_{k \in \mathbb{N}_{m}} e(k) .
\]
Since $G$ is symmetric, $\|x_{m}\|_G = \lambda_G(2^m) \prec
\lambda_{F}(2^{m})^{\alpha}$. Hence,
\[
\sum_{m} \frac{1}{2^{m \varepsilon} \lambda_{F}(2^{m})^{\alpha}}\
x_{m} \in G.
\]
Now, for $k \in \mathbb{N}_{m}$, we have $1/k \leq 1/2^{m}$ and
$1/\lambda_{F}(k) \leq 1/\lambda_{F}(2^{m})$ and the result follows.
\end{proof}

\begin{proposition} \label{inclus vuelta}
Let $F$ and $G$ be a symmetric Banach sequence spaces for which
there exists $0<\varepsilon<1$ such that $\lambda_G(N)\prec
\lambda_F(N)^{1-\varepsilon}$. If $F$ satisfies $N^{\delta}\prec
\lambda_F(N)$ for some $\delta> 0$, then we have $F\hookrightarrow G$.
\end{proposition}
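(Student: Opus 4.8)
The plan is to show that the inclusion $F \hookrightarrow G$ follows once we verify that every $x \in F$ lies in $G$, with a norm estimate; by the closed graph theorem it suffices to prove the set-theoretic inclusion, but in fact we will get the bound directly. Since both spaces are symmetric, it is enough to work with a decreasing nonnegative sequence $x = x^{\star} \in F$ with $\|x\|_F \le 1$, and to bound $\|x\|_G$ by a constant. The key point is that for a decreasing sequence, membership in a symmetric space is controlled by the decay of $x^{\star}(k)$ against the fundamental function: from $\|x\|_F \le 1$ and $x = x^{\star}$ we get $k\, x^{\star}(k)^{\,} \prec \|(x^{\star}(1),\dots,x^{\star}(k))\|$-type estimates, more precisely $\lambda_F(k)\, x^{\star}(k) \le \|x\|_F \le 1$, so that $x^{\star}(k) \prec 1/\lambda_F(k)$.

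First I would record this elementary observation: if $x = x^{\star} \ge 0$ and $\|x\|_F \le 1$, then since the first $k$ coordinates dominate $x^{\star}(k)\sum_{j=1}^k e_j$ coordinatewise, symmetry and the lattice property of $F$ give $x^{\star}(k)\,\lambda_F(k) \le \|x\|_F \le 1$, hence $x^{\star}(k) \le 1/\lambda_F(k)$ for all $k$. Next I would bring in the hypothesis $N^{\delta} \prec \lambda_F(N)$: pick any $\varepsilon' > 0$ with $\varepsilon'\delta < \varepsilon$ (for instance $\varepsilon' = \varepsilon/(2\delta)$), and write
\[
x^{\star}(k) \le \frac{1}{\lambda_F(k)} = \frac{1}{k^{\varepsilon'}\,\lambda_F(k)}\cdot \frac{k^{\varepsilon'}\,\lambda_F(k)}{\lambda_F(k)} \prec \frac{1}{k^{\varepsilon'}\,\lambda_F(k)^{\,1-\varepsilon}},
\]
where in the last step I use $k^{\varepsilon'} \prec \lambda_F(k)^{\varepsilon'/\delta}$, so $k^{\varepsilon'}\lambda_F(k) \prec \lambda_F(k)^{1+\varepsilon'/\delta}$ and $\varepsilon'/\delta \le \varepsilon$ ensures $\lambda_F(k)^{-1} \prec k^{-\varepsilon'}\lambda_F(k)^{-(1-\varepsilon)}$ after adjusting; the bookkeeping of exponents is routine but must be arranged so that the target sequence is exactly of the form covered by Lemma~\ref{lema tecnico}. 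Then Lemma~\ref{lema tecnico}, applied with this $F$, this $G$, the exponent $\alpha = 1-\varepsilon$ (which is legitimate since the hypothesis is $\lambda_G(N) \prec \lambda_F(N)^{1-\varepsilon}$), and $\varepsilon'$ in the role of its ``$\varepsilon$'', yields $\bigl(1/(k^{\varepsilon'}\lambda_F(k)^{1-\varepsilon})\bigr)_k \in G$. By the lattice property of $G$ and the coordinatewise domination just established, $x \in G$ with $\|x\|_G$ bounded by a constant multiple of the $G$-norm of that fixed sequence, uniformly over $\|x\|_F \le 1$; for general $x \in F$ (not necessarily decreasing), symmetry of $G$ closes the argument, and a scaling gives $F \hookrightarrow G$.

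The main obstacle, and the only place requiring care, is the exponent arithmetic in the middle step: one must choose $\varepsilon'$ so that simultaneously (i) $k^{\varepsilon'}$ is absorbed by a small power of $\lambda_F(k)$ via $N^{\delta} \prec \lambda_F(N)$, and (ii) after this absorption the residual power of $\lambda_F(k)$ in the denominator is still at least $1-\varepsilon$, so that Lemma~\ref{lema tecnico} applies with $\alpha = 1-\varepsilon$ and the hypothesis $\lambda_G(N) \prec \lambda_F(N)^{1-\varepsilon}$ is exactly what is needed. Everything else --- the pointwise bound $x^{\star}(k)\lambda_F(k) \le \|x\|_F$, the appeal to the lattice and symmetry properties, and the passage from a norm bound on decreasing sequences to the continuous inclusion --- is standard for symmetric Banach sequence spaces.
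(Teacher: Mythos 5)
Your proposal is correct and follows essentially the same route as the paper: the pointwise bound $x^{\star}(k)\,\lambda_F(k)\le\|x\|_F$, absorption of a small power of $k$ via $N^{\delta}\prec\lambda_F(N)$, and an appeal to Lemma~\ref{lema tecnico} with $\alpha=1-\varepsilon$. One small correction to the exponent bookkeeping you rightly flag as the delicate point: the condition you actually need is $\varepsilon'/\delta\le\varepsilon$, i.e.\ $\varepsilon'\le\varepsilon\delta$, so your illustrative choice $\varepsilon'=\varepsilon/(2\delta)$ only works when $\delta^2\ge 1/2$; the correct (and the paper's) choice is $\varepsilon'=\varepsilon\delta$.
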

\begin{proof}
Let $x \in F$.\ We can assume, without loss of generality, that
$x(k)=x^{\star}(k)$ is decreasing.\ Then
\[
x(k) \lambda_{F}(k) \leq \Big\| \sum_{j=1}^{k} x(j) e_{j}\Big\|_F
\leq \|x \|_{F}.
\]
Now,
$\lambda_{F}(k)=\lambda_{F}(k)^{\varepsilon}\lambda_{F}(k)^{1-\varepsilon}
\succ  \ k^{\varepsilon\delta} \lambda_{F}(k)^{1-\varepsilon}$.\
Hence
\[
x(k) \prec \frac{\|x\|_{F}}{k^{\varepsilon\delta}
\lambda_{F}(k)^{1-\varepsilon}}.
\]
By Lemma \ref{lema tecnico}, $x \in G$.
\end{proof}

Note that the additional condition on the sequence space $F$ is
automatically satisfied whenever $F$ or $G$ have non-trivial
concavity. The previous results can be reformulated to obtain
information on the space $\ell_n(\mathfrak{A},E)$.

\begin{corollary}
Let $E, F$ and $G$ be symmetric Banach sequence spaces and
$\mathfrak{A}$ be a Banach ideal of multilinear forms.
\begin{enumerate}
\item[(a)] If $F\hookrightarrow \ell_n(\mathfrak{A},E) \hookrightarrow G$,
then $\lambda_{G}(N) \prec \| \Phi_{N} \|_{\mathfrak{A}(^{n}E)}
\prec
\lambda_{F}(N)$.
\item[(b)] If there exists $\varepsilon >0$ such that
$\|\Phi_{N}\|_{\mathfrak{A}(^{n}E_{N})} \prec
\lambda_{F}(N)^{1-\varepsilon}$ and $F$ has non-trivial concavity,
then $F \hookrightarrow
\ell_{n}(\mathfrak{A}, E)$.
\item[(c)] If there exists $\varepsilon >0$ such that
$\lambda_{G}(N)^{1+\varepsilon} \prec
\|\Phi_{N}\|_{\mathfrak{A}(^{n}E_{N})}$ and $G$ has non-trivial
concavity, then $\ell_{n}(\mathfrak{A}, E) \hookrightarrow G$.
\end{enumerate}
\end{corollary}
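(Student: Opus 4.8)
The plan is to derive all three parts of the corollary as direct translations of the previously established results, using the two identities $\|\Phi_N\|_{\mathfrak{A}(^nE)}=\lambda_{\ell_n(\mathfrak{A},E)}(N)$ and $\|\Phi_N\|_{\mathfrak{A}(^nE_N)}=\lambda_{\ell_n(\mathfrak{A},E_N)}(N)$, together with the fact (stated just above Lemma~\ref{lema tecnico}) that $F\hookrightarrow G$ implies $\lambda_G(N)\prec\lambda_F(N)$, its partial converse Proposition~\ref{inclus vuelta}, and Lemma~\ref{lema tecnico}.

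For part (a): if $F\hookrightarrow\ell_n(\mathfrak{A},E)\hookrightarrow G$, then applying the monotonicity of fundamental functions under inclusions of symmetric Banach sequence spaces twice gives $\lambda_G(N)\prec\lambda_{\ell_n(\mathfrak{A},E)}(N)\prec\lambda_F(N)$, and since $\lambda_{\ell_n(\mathfrak{A},E)}(N)=\|\Phi_N\|_{\mathfrak{A}(^nE)}$ the claim follows immediately.

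For part (b): the hypothesis reads $\lambda_{\ell_n(\mathfrak{A},E_N)}(N)\prec\lambda_F(N)^{1-\varepsilon}$. I would first note that $\ell_n(\mathfrak{A},E_N)$ and $\ell_n(\mathfrak{A},E)$ have the same fundamental function, because $\Phi_N$ lives on the $N$-dimensional truncation in both cases (one should remark that $\|\Phi_N\|_{\mathfrak{A}(^nE)}=\|\Phi_N\|_{\mathfrak{A}(^nE_N)}$, which follows from the ideal property applied to the inclusion $i_N$ and projection $\pi_N$, both of norm one on symmetric spaces); hence $\lambda_{\ell_n(\mathfrak{A},E)}(N)\prec\lambda_F(N)^{1-\varepsilon}$. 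Now set $G=\ell_n(\mathfrak{A},E)$ in Proposition~\ref{inclus vuelta}: the inequality $\lambda_G(N)\prec\lambda_F(N)^{1-\varepsilon}$ holds, and the side condition $N^\delta\prec\lambda_F(N)$ is automatic when $F$ has non-trivial concavity (since an $s$-concave symmetric sequence space embeds in $\ell_s$, forcing $\lambda_F(N)\succ N^{1/s}$), so Proposition~\ref{inclus vuelta} yields $F\hookrightarrow\ell_n(\mathfrak{A},E)$. Part (c) is the dual statement: with $\lambda_G(N)^{1+\varepsilon}\prec\lambda_{\ell_n(\mathfrak{A},E)}(N)$ one sets $\varepsilon'=\varepsilon/(1+\varepsilon)\in(0,1)$ so that $\lambda_{\ell_n(\mathfrak{A},E)}(N)^{1-\varepsilon'}\succ\lambda_G(N)$, and applies Proposition~\ref{inclus vuelta} with the roles of $F$ and $G$ played by $\ell_n(\mathfrak{A},E)$ and $G$ respectively, the side condition now being granted by the non-trivial concavity of $G$ (one checks that $\lambda_G(N)\succ N^\delta$ suffices, since $\lambda_{\ell_n(\mathfrak{A},E)}(N)\succ\lambda_G(N)^{1+\varepsilon}\succ N^{(1+\varepsilon)\delta}$).

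The only mild subtlety — not really an obstacle — is keeping straight that the fundamental function of $\ell_n(\mathfrak{A},E)$ is computed from $\Phi_N$ restricted to $N$ coordinates, so that the hypotheses phrased with $E_N$ in parts (b) and (c) genuinely control $\lambda_{\ell_n(\mathfrak{A},E)}$; once that identification is recorded, each part is a one-line invocation of the results already proved. I would therefore present the proof as three short paragraphs, each no more than a few lines, flagging the fundamental-function identity once at the start.
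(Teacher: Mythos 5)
Your proof is correct and follows exactly the route the paper intends: the corollary is stated there without proof as a ``reformulation'' of the fundamental-function monotonicity under inclusions and of Proposition~\ref{inclus vuelta}, and your argument---including the identification $\|\Phi_N\|_{\mathfrak{A}(^nE)}=\|\Phi_N\|_{\mathfrak{A}(^nE_N)}=\lambda_{\ell_n(\mathfrak{A},E)}(N)$ and the observation that non-trivial concavity supplies the lower bound $N^\delta\prec\lambda_F(N)$---is precisely that reformulation. No gaps.
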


\bigskip

If $\mathfrak{A}$ is a normed ideal of $n$-linear forms, the
maximal hull $\mathfrak{A}^{\mathrm{max}}$ of $\mathfrak{A}$ is defined as
the class of all $n$-linear forms $T$ such that
\begin{align*}
\|T\|_{\mathfrak{A}^{\mathrm{max}}(E_{1},\dots,E_{n})}
:=\sup\{  \|T \mid_{M_{1} \times \cdots \times M_{n}} & \|_{\mathfrak{A}(M_{1}, \dots ,M_{n})}: \\
& M_{i} \subset E_{i} ,\  \dim M_{i}<\infty \}
\end{align*}
is finite. $\mathfrak{A}^{\mathrm{max}}$ is always complete and it is the largest ideal whose norm coincides with
$\|\cdot\|_{\mathfrak{A}}$ in finite dimensional spaces. A normed ideal  $\mathfrak{A}$ is called maximal if
$(\mathfrak{A},\|\cdot\|_{\mathfrak{A}})=(\mathfrak{A}^{\mathrm{max}},\|\cdot\|_{\mathfrak{A}^{\mathrm{max}}})$. Maximal ideals are those
whose norm is uniquely determined by finite dimensional subspaces.

It is a well known fact that the space of $n$-linear forms on a finite dimensional space $M$ can be identified with the
$n$-fold tensor product $\bigotimes^{n} M^{*}$ by identifying each tensor $\gamma_{1} \otimes \cdots \otimes \gamma_{n}$ with the mapping
$(x_{1} , \ldots , x_{n}) \rightsquigarrow \gamma_{1} (x_{1}) \cdots  \gamma_{n}(x_{n})$. Then the ideal norm induces a tensor
norm $\eta$ on $\bigotimes^{n} M^{*}$ (the tensor product with this norm is denoted by  $\bigotimes_{\eta}^{n} M^{*}$). By a standard
procedure the norm $\eta$ can be extended from tensor norms in the
class of finite dimensional normed spaces to the class of all normed spaces. In this case, the tensor norm $\eta$ and the ideal
$\mathfrak{A}$ are said to be associated. A detailed study of the subject and presentation
of the procedure can be found in \cite{LibroDeFl93, Fl01, Fl02, FlGa03, FlHu02}.

Given a normed ideal $\mathfrak{A}$ associated to the finitely
generated tensor norm $\alpha$, its adjoint ideal $\mathfrak{A}^*$
is defined by
\[
\mathfrak{A}^*(^{n} E):=\big(\textstyle\bigotimes_{\eta}^{n} E \big)^{*}.
\]
The adjoint ideal is called dual ideal in \cite{Fl01}. The tensor norm associated to $\mathfrak{A}^*$ is denoted by
$\eta^*$. We also have the representation theorem \cite[Section 3.2]{FlHu02} (see also \cite[Section 4]{Fl01}):
\[
\mathfrak{A}^{\mathrm{max}}(^{n} E)= \big(\textstyle\bigotimes_{\eta^*}^{n} E \big)^{*}.
\]
\noindent In particular, this shows that the adjoint ideal $\mathfrak{A}^*$ is maximal.\\

For a maximal ideal $\mathfrak{A}$, the space
$\ell_{n}(\mathfrak{A}, E)$ coincides isometrically with
$\ell_{n}(\mathfrak{A}, E^{\times \times})$. This fact is a
consequence of the following lemma, which we believe is of independent interest.

\begin{lemma} \label{density lemma}
Let $E$ be a symmetric Banach sequence space and $\mathfrak{A}$ a
maximal Banach ideal of multilinear forms.\ Let $T:E \times \cdots
\times E \rightarrow \mathbb{C}$ $n$-linear and suppose there exists $C>0$ such that, for every $N \in
\mathbb{N}$, the restriction $T^{N}$ to $E_{N} \times \cdots
\times E_{N}$ satisfies $\|T^{N}\|_{\mathfrak{A}(^{n} E_{N})}
\leq C$.\ Then $T \in \mathfrak{A}(^{n} E)$ and
$\|T\|_{\mathfrak{A}(^{n} E)} \leq C$.
\end{lemma}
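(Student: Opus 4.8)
The plan is to use the maximality of $\mathfrak{A}$ together with the description of maximal ideals via finitely generated tensor norms: by the representation theorem, $\mathfrak{A}^{\mathrm{max}}(^nE) = (\bigotimes_{\eta^*}^n E)^*$ for the tensor norm $\eta^*$ associated to the adjoint ideal. Since $\mathfrak{A}$ is maximal, it suffices to show that $T$, viewed as a linear functional on the algebraic tensor product $\bigotimes^n E$, is bounded with respect to the norm $\eta^*$, with bound at most $C$. The key observation is that $\eta^*$ is finitely generated: the $\eta^*$-norm of a tensor $z \in \bigotimes^n E$ is computed (up to approximation) inside finite-dimensional subspaces $M_1 \times \cdots \times M_n$ of $E^n$, and by symmetry of $E$ each such collection of subspaces can be absorbed into a single truncation $E_N$ for $N$ large enough (this uses that $E$ is a symmetric sequence space, so the coordinate shifts needed to move any finite-dimensional subspace of $E$ into some $E_N$ are isometries).

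First I would fix an elementary tensor representation of an arbitrary $z = \sum_j x_1^j \otimes \cdots \otimes x_n^j \in \bigotimes^n E$ and, given $\varepsilon>0$, choose finite-dimensional $M_1,\dots,M_n \subset E$ containing all the relevant $x_i^j$ and computing $\|z\|_{\eta^*}$ within $\varepsilon$ (possible since $\eta^*$ is finitely generated and the embedding $\bigotimes_{\eta^*}(M_1,\dots,M_n) \hookrightarrow \bigotimes_{\eta^*}^n E$ is isometric on such subspaces). Next I would use the symmetry of $E$ to replace each $M_i$ by its image under a common permutation of coordinates landing inside $E_N$ for a suitable $N$; the corresponding multilinear form on $E_N$ is then exactly (a coordinate relabelling of) $T^N$, so its $\mathfrak{A}$-norm is $\le C$ by hypothesis. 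Pairing $T$ with $z$ then gives $|T(z)| = |T^N(\tilde z)| \le \|T^N\|_{\mathfrak{A}(^nE_N)} \, \|\tilde z\|_{\eta^*} \le C(\|z\|_{\eta^*} + \varepsilon)$, using that $T^N$ acting on $\bigotimes_{\eta^*}^n E_N$ is the functional associated to $T^N \in \mathfrak{A}(^nE_N) = (\bigotimes_{\eta^*}^n E_N)^*$. Letting $\varepsilon \to 0$ yields $|T(z)| \le C\|z\|_{\eta^*}$ for all $z$.

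Finally, since $T$ extends to a bounded functional on $\bigotimes_{\eta^*}^n E$ with norm at most $C$, the representation theorem gives $T \in \mathfrak{A}^{\mathrm{max}}(^nE) = \mathfrak{A}(^nE)$ with $\|T\|_{\mathfrak{A}(^nE)} \le C$, as desired. The main obstacle I anticipate is the bookkeeping in the middle step: one must verify carefully that a finite-dimensional subspace of a symmetric sequence space $E$ can be moved isometrically into some truncation $E_N$ and that this move is compatible with the identification of $T|_{E_N \times \cdots \times E_N}$ with $T^N$ — this is intuitively clear but requires that the finitely many coordinates supporting the chosen vectors can be shifted to $\{1,\dots,N\}$ by a permutation, which is an isometry of $E$ by symmetry, and that the tensor norm $\eta^*$ (being finitely generated, hence respecting isometric embeddings) transports correctly under this identification. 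Everything else is a routine application of the duality between maximal ideals and finitely generated tensor norms recalled in the preceding paragraphs.
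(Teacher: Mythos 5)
Your overall strategy---use maximality to identify $\mathfrak{A}(^{n}E)$ with the dual of a tensor product under a finitely generated tensor norm, and then exploit finite generation to reduce the estimate to finite-dimensional data---is the same as the paper's. However, the step you defer as ``bookkeeping'' is a genuine gap, and it fails as stated. A finite-dimensional subspace $M\subset E$ arising from an arbitrary tensor $z\in\bigotimes^{n}E$ is in general spanned by vectors with \emph{infinite} support, and no permutation of $\mathbb{N}$ maps a vector of infinite support to a finitely supported one (permutations preserve the cardinality of the support); so $M$ cannot be ``absorbed into a truncation $E_{N}$'' by a coordinate relabelling. Moreover, even in the harmless case of finitely supported vectors whose supports are not contained in $\{1,\dots,N\}$, the relabelled form would be $T\circ(\sigma,\dots,\sigma)$ restricted to $E_{N}$, which is not $T^{N}$, and the hypothesis only controls $\|T^{N}\|_{\mathfrak{A}(^{n}E_{N})}$; in that case one should simply take $N$ larger rather than permute. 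The upshot is that your argument only yields $|T(z)|\leq C\,\nu(z)$ for $z$ in the algebraic tensor product $\bigotimes^{n}E_{0}$ of finitely supported sequences, not for all of $\bigotimes^{n}E$.

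The missing ingredient is a density/extension step. The paper's proof first establishes exactly the bound that your argument legitimately gives: for $s\in\bigotimes^{n}E_{0}$ one has $s\in\bigotimes^{n}E_{N}$ for some $N$, the map $\bigotimes_{\nu}^{n}E_{N}\hookrightarrow\bigotimes_{\nu}^{n}E_{0}$ is an isometry onto its image (using that $i_{N}$ and $\pi_{N}$ have norm one by symmetry of $E$, together with the metric mapping property), and hence $|T(s)|=|T^{N}(s)|\leq C\,\nu(s,\bigotimes^{n}E_{0})$. It then invokes the Density Lemma \cite[13.4]{LibroDeFl93} for finitely generated tensor norms to pass from continuity on $\bigotimes_{\nu}^{n}E_{0}$ to $T\in(\bigotimes_{\nu}^{n}E)^{*}=\mathfrak{A}(^{n}E)$ with $\|T\|_{\mathfrak{A}(^{n}E)}\leq C$. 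You should replace the permutation step by this two-stage argument: prove the estimate on $\bigotimes_{\nu}^{n}E_{0}$ and then extend by density.
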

\begin{proof}
Since $\mathfrak{A}$ is maximal, there exists a finitely generated
tensor norm $\nu$ such that $(\bigotimes_{\nu}^{n} E)^{*} =
\mathfrak{A}(^{n}E)$.\ Since $E$ is a symmetric space, both the
inclusion $i_{N}:E_{N} \hookrightarrow E_{0}$ and the projection
$\pi_{N} :E_{0} \rightarrow E_{N}$ have norm $1$.\ These, together
with the metric mapping property, give that the mapping
$\bigotimes_{\nu}^{n} E_{N} \hookrightarrow
\bigotimes_{\nu}^{n} E_{0}$ is an isometry onto its image.

Let now $s \in \bigotimes_{\nu}^{n} E_{0}$; then $s \in
\bigotimes_{\nu}^{n} E_{N}$ for some $N$ and
\[
|T(s)|=|T^{N}(s)| \leq C \ \nu(s,\textstyle\bigotimes^{n} E_{N}) = C \
\nu(s,\bigotimes^{n} E_{0}).
\]
Hence $T|_{\bigotimes_{\nu}^{n} E_{0}} \in
(\bigotimes_{\nu}^{n} E_{0})^{*}$.\ Since $\bigotimes_{\nu}^{n}
E_{0}$ is dense in $\bigotimes_{\nu}^{n} E$, by the Density
Lemma \cite[13.4]{LibroDeFl93}, $T \in (\bigotimes_{\nu}^{n}
E)^{*}=\mathfrak{A}(^{n}E)$ and  $\|T\|_{\mathfrak{A}(^{n} E)} \leq
C$.
\end{proof}
The previous lemma also holds if $E$ is a Banach space with an
unconditional basis with constant $K$.\ Indeed, in this case $\|
\pi_{N} \| \leq K$, and $\nu (s , \bigotimes^{n} E_{0}) \leq \nu
(s , \bigotimes^{n} E_{N})
\leq K^{n} \nu (s , \bigotimes^{n} E_{0})$.\ Then $\|T\|_{\mathfrak{A}(^{n} E)} \leq C \ K^{n}$.\\

\begin{proposition} \label{doble kothe}
Let $E$ be a symmetric Banach sequence space and $\mathfrak{A}$ a
maximal Banach ideal of multilinear forms. Then
$$\ell_n(\mathfrak{A},E)\overset1=\ell_n(\mathfrak{A},E^{\times\times})$$
\end{proposition}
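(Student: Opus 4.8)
The plan is to deduce the isometric identity from Lemma~\ref{density lemma} together with the observation that $E$ and $E^{\times\times}$ have the same finite-dimensional truncations. First I would record the facts about $E^{\times\times}$ that make this work. Since $E$ is symmetric, so is $E^{\times}$, and hence so is $E^{\times\times}$; thus $E^{\times\times}$ is again a symmetric Banach sequence space, and the canonical inclusion $j\colon E\hookrightarrow E^{\times\times}$ has norm $\leq 1$. The crucial point is that $E_{N}\overset{1}{=}(E^{\times\times})_{N}$ for every $N$: indeed $(E_{N})^{*}\overset{1}{=}(E^{\times})_{N}$ was noted in the Preliminaries, and applying the same identity to $E^{\times}$ gives $((E^{\times})_{N})^{*}\overset{1}{=}(E^{\times\times})_{N}$, so by finite-dimensionality $E_{N}\overset{1}{=}(E_{N})^{**}\overset{1}{=}((E^{\times})_{N})^{*}\overset{1}{=}(E^{\times\times})_{N}$. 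In particular $(E^{\times\times})_{0}=E_{0}$, and a diagonal form restricted to these truncations is literally the same $n$-linear form whether read in $E$ or in $E^{\times\times}$.

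With this in hand one inclusion is immediate. If $\alpha\in\ell_{n}(\mathfrak{A},E^{\times\times})$, then $T_{\alpha}\in\mathfrak{A}(^{n}E^{\times\times})$, and the diagonal form on $E$ equals $T_{\alpha}\circ(j,\dots,j)$; by the metric mapping property of $\mathfrak{A}$ we get $T_{\alpha}\in\mathfrak{A}(^{n}E)$ with $\|T_{\alpha}\|_{\mathfrak{A}(^{n}E)}\leq\|T_{\alpha}\|_{\mathfrak{A}(^{n}E^{\times\times})}\,\|j\|^{n}\leq\|T_{\alpha}\|_{\mathfrak{A}(^{n}E^{\times\times})}$, that is, $\|\alpha\|_{\ell_{n}(\mathfrak{A},E)}\leq\|\alpha\|_{\ell_{n}(\mathfrak{A},E^{\times\times})}$.

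For the reverse inclusion, take $\alpha\in\ell_{n}(\mathfrak{A},E)$ and put $C=\|T_{\alpha}\|_{\mathfrak{A}(^{n}E)}$. Restricting $T_{\alpha}$ on $E$ to $E_{N}\times\dots\times E_{N}$ amounts to composing with the norm-one inclusions $i_{N}\colon E_{N}\hookrightarrow E$, so $\|(T_{\alpha})^{N}\|_{\mathfrak{A}(^{n}E_{N})}\leq C$ for every $N$. By the truncation isometry of the first paragraph, the same bound holds for the restriction of the diagonal form $T_{\alpha}$ on $E^{\times\times}$ to $(E^{\times\times})_{N}\times\dots\times(E^{\times\times})_{N}$. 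Since $E^{\times\times}$ is a symmetric Banach sequence space and $\mathfrak{A}$ is maximal, Lemma~\ref{density lemma} applied with $E^{\times\times}$ in place of $E$ yields $T_{\alpha}\in\mathfrak{A}(^{n}E^{\times\times})$ with $\|T_{\alpha}\|_{\mathfrak{A}(^{n}E^{\times\times})}\leq C$, i.e. $\|\alpha\|_{\ell_{n}(\mathfrak{A},E^{\times\times})}\leq\|\alpha\|_{\ell_{n}(\mathfrak{A},E)}$. Combining the two inequalities gives $\ell_{n}(\mathfrak{A},E)\overset{1}{=}\ell_{n}(\mathfrak{A},E^{\times\times})$.

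The one point requiring care is this last use of Lemma~\ref{density lemma}: it is phrased for an $n$-linear form already given on all of $E^{\times\times}$, so I would make explicit that ``$T_{\alpha}$ on $E^{\times\times}$'' denotes the $n$-linear form extending the diagonal form from $E_{0}=(E^{\times\times})_{0}$ — equivalently, one takes as its value on $E^{\times\times}$ precisely the element of $\mathfrak{A}(^{n}E^{\times\times})$ that the lemma produces, which agrees with $\sum_{k}\alpha(k)x_{1}(k)\cdots x_{n}(k)$ on finitely supported sequences. Everything else is routine bookkeeping with the metric mapping property and the isometry $E_{N}\overset{1}{=}(E^{\times\times})_{N}$, so I expect no further obstacle.
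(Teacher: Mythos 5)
Your proposal is correct and follows essentially the same route as the paper: the easy inclusion via the norm-one embedding $E\hookrightarrow E^{\times\times}$ and the ideal property, and the converse via the isometry $E_{N}\overset{1}{=}(E^{\times\times})_{N}$ combined with Lemma~\ref{density lemma} applied to $E^{\times\times}$. Your extra remark on how the lemma produces the element of $\mathfrak{A}(^{n}E^{\times\times})$ from its values on $(E^{\times\times})_{0}=E_{0}$ is a point the paper leaves implicit, but the argument is the same.
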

\begin{proof}
Since $E$ is contained in $E^{\times\times}$ with a norm one
inclusion, it is immediate that
$\ell_n(\mathfrak{A},E^{\times\times})\subset
\ell_n(\mathfrak{A},E)$ (with norm one inclusion).

Conversely, let $\alpha \in \ell_n(\mathfrak{A},E)$. For each $N$,
$\|T^N_\alpha\|_{\mathfrak{A}(^{n} E_N)}\le
\|T_\alpha\|_{\mathfrak{A}(^{n} E)}$. Since
$E_N\overset1=(E_N)^{\times\times}\overset1=(E^{\times\times})_N$,
we have $\|T^N_\alpha\|_{\mathfrak{A}(^{n} (E^{\times\times})_N)}\le
\|T_\alpha\|_{\mathfrak{A}(^{n} E)}$. By Lemma~\ref{density lemma},
$T_\alpha$ belongs to $\mathfrak{A}(^{n} E^{\times\times})$ and
$\|T_\alpha\|_{\mathfrak{A}(^{n} E^{\times\times})} \le
\|T_\alpha\|_{\mathfrak{A}(^{n} E)}$.
\end{proof}
The ideal $\mathcal{L}$ of all multilinear forms is obviously maximal; then by Theorem \ref{duales 2} (c) we have the
following reformulation of \cite[Theorem 2.5]{LouPe06}
\[
\ell_{n}(\mathcal{L}, d_{*} (w,1)) \overset{1}{=} d(w^{n},1).
\]

Let us recall the trace duality between
$\mathfrak{A}^{*}(^{n}E^{\times}_{N})$ and
$\mathfrak{A}(^{n}E_{N})$. Suppose $T \in
\mathfrak{A}^{*}(^{n}E^{\times}_{N})$ can be written as a finite sum
of the form
\[
T(\gamma_{1}, \dots , \gamma_{n})= \sum_{j} \gamma_{1}(x_1^{j})
\cdots \gamma_{n}(x_n^{j})
\] and $S\in \mathfrak{A}(^{n}E_{N})$ is of the form
\[
S(x_{1}, \dots , x_{n})= \sum_{i} \gamma_1^{i}(x_{1}) \cdots
\gamma_n^{i}(x_{n}).
\]
Then, the duality is given by
\begin{equation} \label{ii}
\begin{split}
\langle T,S \rangle  =   \sum_{i,j} \gamma_1^{i}(x_1^{j}) & \cdots
\gamma_n^{i}(x_n^{j}) \\
& = \sum_{i} T(\gamma_1^{i}, \dots ,
\gamma_n^{i}) = \sum_{j} S(x_1^{j}, \dots , x_n^{j}).
\end{split}
\end{equation}

The following finite dimensional identifications are easy to check.
These will enable us to prove a duality result in the proposition
below.
\begin{gather}
\ell_{n}(\mathfrak{A}, E_{N}) \stackrel{1}{=} [\ell_{n}(\mathfrak{A}, E)]_{N}  \label{i} \\
 \mathfrak{A}(^{n}E_{N})^{*} \stackrel{1}{=}
 \mathfrak{A}^{*}(^{n}E^{\times}_{N}) \stackrel{1}{=}
\mathfrak{A}^{*}(^{n}E^{*}_{N})  \label{iii} \\
\ell_{n}(\mathfrak{A}, E)^{\times}_{N} \stackrel{1}{=} \ell_{n}(\mathfrak{A}, E_{N})^{\times}
\stackrel{1}{=} \ell_{n}(\mathfrak{A}^{*}, E^{\times}_{N})
\stackrel{1}{=} \ell_{n}(\mathfrak{A}^{*}, E^{\times})_{N}  \label{iv}
\end{gather}

\begin{proposition} \label{dual ele}
Let $E$ be a symmetric Banach sequence space and $\mathfrak{A}$ a Banach ideal of multilinear forms; then
\[
\ell_{n} (\mathfrak{A},E)^{\times} \stackrel{1}{=} \ell_{n}(\mathfrak{A}^{*},E^{\times}).
\]
\end{proposition}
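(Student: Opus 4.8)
The plan is to reduce the infinite-dimensional identity to the finite-dimensional identifications \eqref{i}–\eqref{iv}, and then pass to the limit using the description of the Köthe dual as a supremum over finite-dimensional truncations. First I would recall that for any symmetric Banach sequence space $X$ one has $\|z\|_{X^\times} = \sup_N \|\pi_N z\|_{(X_N)^\times}$, since the defining supremum $\sup_{\|x\|_X\le 1}|\sum_k z(k)x(k)|$ can be computed by first fixing the support of $x$ in $\{1,\dots,N\}$ and then letting $N\to\infty$ (this uses $\ell_1\subseteq X\subseteq\ell_\infty$ and the monotonicity of the norm). Applying this to $X=\ell_n(\mathfrak{A},E)$, a sequence $\alpha$ lies in $\ell_n(\mathfrak{A},E)^\times$ iff $\sup_N \|\pi_N\alpha\|_{(\ell_n(\mathfrak{A},E)_N)^\times}<\infty$, and the value of that supremum is exactly $\|\alpha\|_{\ell_n(\mathfrak{A},E)^\times}$.

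Next I would chain the finite-dimensional identities. By \eqref{iv}, for each $N$,
\[
\ell_n(\mathfrak{A},E)_N^{\times} \overset{1}{=} \ell_n(\mathfrak{A}^*,E^\times)_N,
\]
so $\|\pi_N\alpha\|_{(\ell_n(\mathfrak{A},E)_N)^\times} = \|\pi_N\alpha\|_{\ell_n(\mathfrak{A}^*,E^\times)_N} = \|T^N_\alpha\|_{\mathfrak{A}^*(^n (E^\times)_N)}$, the last equality by \eqref{i} applied to the ideal $\mathfrak{A}^*$ and the space $E^\times$. Therefore $\alpha\in\ell_n(\mathfrak{A},E)^\times$ iff $\sup_N \|T^N_\alpha\|_{\mathfrak{A}^*(^n (E^\times)_N)}<\infty$, with equality of the two quantities. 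Now I invoke that $\mathfrak{A}^*$ is always maximal (stated in the excerpt, via the representation theorem) and that $E^\times$ is a symmetric Banach sequence space. Lemma~\ref{density lemma} then says precisely that the uniform boundedness of the finite restrictions $T^N_\alpha$ is equivalent to $T_\alpha\in\mathfrak{A}^*(^n E^\times)$, i.e. to $\alpha\in\ell_n(\mathfrak{A}^*,E^\times)$, and moreover $\|T_\alpha\|_{\mathfrak{A}^*(^n E^\times)} = \sup_N\|T^N_\alpha\|_{\mathfrak{A}^*(^n (E^\times)_N)}$. Combining the two chains of equalities gives $\|\alpha\|_{\ell_n(\mathfrak{A},E)^\times} = \|\alpha\|_{\ell_n(\mathfrak{A}^*,E^\times)}$ for all $\alpha$, which is the asserted isometry.

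The routine part is verifying \eqref{i}–\eqref{iv}, but these are granted as "easy to check," so the real work is just the bookkeeping of which space and which ideal each identity is applied to, and making sure the maximality hypothesis of Lemma~\ref{density lemma} is met — this is where $\mathfrak{A}^*$ being maximal (rather than $\mathfrak{A}$) is essential, and it is the one point I would flag as the conceptual crux: without passing to the adjoint ideal one could not run the density argument. A small technical point to check carefully is that the formula $\|z\|_{X^\times}=\sup_N\|\pi_N z\|_{(X_N)^\times}$ is genuinely an isometric statement, including the case where the left side is infinite; this follows from the remark in the Preliminaries that $(E_N)^*\overset1=(E^\times)_N$ together with monotone convergence on the defining sums. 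Everything else is substitution.
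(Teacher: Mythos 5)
Your proof is correct and follows essentially the same route as the paper: both arguments reduce everything to the finite-dimensional identifications \eqref{i}--\eqref{iv} and then pass to the limit using the maximality of the adjoint ideal $\mathfrak{A}^{*}$ via Lemma~\ref{density lemma}, which is indeed the crux you identify. The only (minor) divergence is in the inequality $\|\alpha\|_{\ell_{n}(\mathfrak{A},E)^{\times}}\leq\|\alpha\|_{\ell_{n}(\mathfrak{A}^{*},E^{\times})}$: the paper obtains it by explicitly pairing $T_{\pi_{N}(\alpha)}$ against diagonal forms $T_{\pi_{N}(\tilde\beta)}$ with $\beta$ in the unit ball of $\ell_{n}(\mathfrak{A},E)$ through the trace duality \eqref{ii}, whereas you derive it from the truncation formula $\|z\|_{X^{\times}}=\sup_{N}\|\pi_{N}z\|_{(X_{N})^{\times}}$ combined with \eqref{iv} --- the same finite-dimensional duality, merely packaged inside the granted isometry.
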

\begin{proof}
Let us take first $\alpha \in \ell_{n} (\mathfrak{A},E)^{\times}$;
then the associated $n$-linear form  $T_{\alpha}$ is defined on
the space of finite sequences in $E^{\times}$.\ Moreover, using
\eqref{iv}, we have
\begin{multline*}
\|T_{\alpha}|_{E_{N}^{\times} \times \cdots \times E_{N}^{\times}}
\|_{\mathfrak{A}^{*}(^{n}E_{N}^{\times})}  = \| \pi_{N} (\alpha)
\|_{\ell_{n}(\mathfrak{A}^{*},E_N^{\times})} \\
= \| \pi_{N} (\alpha) \|_{\ell_{n}(\mathfrak{A}, E)^{\times}_{N}}
\leq \| \alpha \|_{\ell_{n}(\mathfrak{A}, E)^{\times}}.
\end{multline*}
By Lemma \ref{density lemma}, $\alpha$ belongs to
$\ell_{n}(\mathfrak{A}^{*},E^{\times})$ and $\| \alpha
\|_{\ell_{n}(\mathfrak{A}^{*}, E^{\times})} = \| T_{\alpha}
\|_{\mathfrak{A}^{*}(^{n} E^{\times})}
\leq \| \alpha \|_{\ell_{n}(\mathfrak{A}, E)^{\times}}$.\\

We take now $\alpha \in \ell_{n}(\mathfrak{A}^{*},E^{\times})$ and a
norm one $\beta \in \ell_{n}(\mathfrak{A},E)$.\ For each $j$, let
$\tilde\beta(j)$ be such that $\alpha(j) \tilde\beta(j)=|\alpha(j)
\beta(j)|$. Then, by symmetry and \eqref{ii}
\begin{multline*}
\sum_{j=1}^{N} |\alpha(j) \beta(j)|
= \sum_{j=1}^{N} \alpha(j) \tilde\beta(j) = \langle T_{\pi_{N}(\alpha)} ,
T_{\pi_{N}(\tilde\beta)} \rangle_{\mathfrak{A}^{*}(^{n}E_{N}^{\times}), \mathfrak{A}(^{n}E_{N})} \\
\leq \| T_{\alpha}\|_{\mathfrak{A}^{*}(^{n}E^{\times})} \
\|T_{\tilde\beta}\|_{\mathfrak{A}(^{n}E)}= \|
T_{\alpha}\|_{\mathfrak{A}^{*}(^{n}E^{\times})} \
\|T_{\beta}\|_{\mathfrak{A}(^{n}E)}
=\| \alpha \|_{\ell_{n}(\mathfrak{A}^{*},E^{\times})}.
\end{multline*}
And this completes the proof.
\end{proof}

By applying Proposition~\ref{dual ele} to the adjoint ideal and to
the K\"othe dual of $E$ and Proposition~\ref{doble kothe}  we get
\[
\ell_{n} (\mathfrak{A}^{*}, E^{\times})^{\times} = \ell_{n}
(\mathfrak{A}^{**}, E^{\times \times}) = \ell_{n}
(\mathfrak{A}^{\mathrm{max}}, E^{\times \times})= \ell_{n}
(\mathfrak{A}^{\mathrm{max}}, E)
\]
isometrically. Therefore, if $\mathfrak{A}$ is maximal we
immediately have
\[
\ell_{n} (\mathfrak{A},E)\overset 1 = \ell_{n} (\mathfrak{A}^{*}, E^{\times})^{\times}.
\]

In view of Proposition \ref{dual ele} we can use  Theorem \ref{lorentz}  and Theorem \ref{duales 2} to get results
on ideals other than $\mathcal{L}$. Let us recall that $T \in\mathcal{L} (^{n} E)$ is called nuclear if
there are sequences $(\gamma_{1,k})_{k}, \dots, (\gamma_{n,k})_{k}$ in
$E^{*}$ with $\| \gamma_{i,k}\| \leq 1$ for all $k$ and $i =1,\dots,n$
and there is $(\lambda(k))_{k} \in\ell_{1}$ so that for every
$x_{1},\dots, x_{n} \in E$
\[
T(x_{1}, \dots, x_{n})= \sum_{k} \lambda(k)\cdot \gamma_{1,k}(x_{1}) \cdots \gamma_{n,k}(x_{n}) .
\]

We denote by $\mathcal{N}$ the ideal of nuclear forms. The nuclear norm is defined as the infimum of
$\sum_{k} \Vert \lambda(k) \Vert  \Vert \gamma_{1,k} \Vert \cdots \Vert \gamma_{n,k} \Vert$ over all possible representations.\\

A mapping $T \in\mathcal{L} (^{n} E)$ is called integral if
there exists a positive Borel-Radon measure $\mu$ on $B_{E^{*}}
\times\cdots\times B_{E^{*}}$ (with the weak$^{*}$-topologies) such
that
\[
T(x_{1}, \dots, x_{n})= \int_{B_{E^{*}} \times\cdots\times B_{E^{*}}}
\gamma_{1}(x_{1}) \cdots \gamma_{n}(x_{n}) \ d \mu(\gamma_{1}, \dots, \gamma_{n})
\]
for all $x_{1},\dots, x_{n} \in X$ (see \cite[4.5]{LibroDeFl93}
and \cite{Al85}).\ The ideal of integral multilinear forms
is denoted by $\mathcal{I}$. It is well known that $\mathcal{L}^{*}=\mathcal{I}$. We then have

\begin{gather*}
\ell_{n} (\mathcal{I}, d(w,p)) \stackrel{1}= \left\{
\begin{array}{ll}  d(w^n,1)^* & \text{ if } p=1 \\
& \\
d(w^{\frac{n'}{n'-p}}, \frac{p'}{p'-n})^{*} & \text{ if  } 1<p<n'  \\
& \\
\ell_{1} & \text{ if  } n' \leq p
\end{array}
\right. \\
 \\
\ell_{n} (\mathcal{I}, d(w,p)^{*}) \stackrel{1}= \left\{
\begin{array}{ll}
m_\Psi^{\times}=(m^0_\Psi)^{*} & \text{ if  } 1\le p<n \\
& \\
d(w, p/n) & \text{ if  } n \leq p  \\
\end{array}
\right.
\end{gather*}
Here $m^0_\Psi$ denotes the subspace of order continuous elements of $m_\Psi$, and verifies  $(m^0_\Psi)^{**}=m_\Psi$
(see \cite{KaLe04}). The equality $m_\Psi^{\times}=(m^0_\Psi)^{*}$ follows from the proof of \cite[Theorem 3.4]{KaLe04}.

Whenever a space $E$ is reflexive or has a separable dual, nuclear
and integral mappings on $E$ coincide. Therefore, for $1<p<\infty$,
$\ell_{n} (\mathcal{I}, d(w,p))=\ell_{n} (\mathcal{N}, d(w,p))$ and
$\ell_{n} (\mathcal{I}, d(w,p)^*)=\ell_{n} (\mathcal{N}, d(w,p)^*)$.
Also, $\ell_{n} (\mathcal{N}, d_*(w,1))=\ell_{n} (\mathcal{I},
d_*(w,1))=\ell_{n} (\mathcal{I}, d^*(w,1)) $ (the last equality
follows from Proposition~\ref{doble kothe}).

By Remark \ref{obs2}, for $p<n$,
$\ell_{n} (\mathcal{I}, d(w,p)^{*})$  can be identified isomorphically with
$d(\overline{w},1)^{**}$ for some $\overline{w}$. Moreover, if $p<n$
and  $w$ is $n/(n-p)$-regular, then $\ell_{n} (\mathcal{I},
d(w,p)^{*})=\ell_1$ by Theorem \ref{lorentz}.

\begin{remark}
We have already mentioned that $\|\Phi_N\|_{\mathfrak{A}(^nE)}=\lambda_{\ell_{n}(\mathfrak{A},E)}(N)$ always holds.
Therefore, all the previous results immediately give estimations for the usual
and the nuclear norms of $\Phi_{N}$ (the nuclear and integral norms
of $\Phi_{N}$ always coincide).

Moreover, these estimates have an immediate tensor counterpart, since
$\|\Phi_{N}\|_{\mathcal{L}(^{n}E)} = \| \sum_{j=1}^{N} e'_{j}
\otimes \cdots \otimes e'_{j} \|_{\bigotimes_{\varepsilon}^{n} E'}$
and $\|\Phi_{N}\|_{\mathcal{N}(^{n}E)} = \| \sum_{j=1}^{N} e'_{j}
\otimes \cdots \otimes e'_{j} \|_{\bigotimes_{\pi}^{n} E'}$ ($\varepsilon$ and $\pi$ denote respectively the injective and
projective tensor norms).
\end{remark}

\section*{Acknowledgements}
We would like to thank Silvia Lassalle and Andreas Defant for helpful conversations and
suggestions that improved the final shape of the paper.

Most of the work in this article was performed while the third cited author was
visiting the Departments of Mathematics at Universidad de Buenos Aires and Universidad de San Andr\'es during
the summer/winter of 2006 supported by grants GV-AEST06/092 and UPV-PAID-00-06. He
wishes to thank all the people in and outside both Departments that made that such a
delightful time.

\end{document}